\theoremstyle{plain}
\newtheorem{introtheorem}{Theorem}
\newtheorem*{introcorollary}{Corollary}
\newtheorem{theorem}{Theorem}[section]
\newtheorem{lemma}[theorem]{Lemma}
\newtheorem{corollary}[theorem]{Corollary}
\newtheorem*{proposition*}{Proposition}
\theoremstyle{definition}
\newtheorem{example}[theorem]{Example}
\theoremstyle{remark}
\newtheorem{remark}[theorem]{Remark}
\newcommand{\secref}[1]{Section~\ref{#1}}
\newcommand{\thmref}[1]{Theorem~\ref{#1}}
\newcommand{\lemref}[1]{Lemma~\ref{#1}}
\newcommand{\corref}[1]{Corollary~\ref{#1}}
\def\Q{{\mathbb Q}}
\def\map{\mathrm{map}}
\def\cat0{\mathrm{cat}_0}
\def\aut{\mathrm{aut}_1\,}
\def\ev{\mathrm{ev}}
\begin{document}

\title[Gottlieb Groups of Function Spaces]
{Gottlieb Groups of Function Spaces}

\author{Gregory  Lupton}

\address{Department of Mathematics,
           Cleveland State University,
           Cleveland OH 44115}

\email{G.Lupton@csuohio.edu}

\author{Samuel Bruce Smith}

\address{Department of Mathematics,
   Saint Joseph's University,
   Philadelphia, PA 19131}

\email{smith@sju.edu}

\date{\today}

\keywords{Gottlieb group, generalized Gottlieb group, function space, free loop space,  evaluation fibration, $T$-space, $G$-space, }

\subjclass[2010]{Primary: 55Q05, 55Q70, 55P35; Secondary: }

\begin{abstract}
We  analyze the Gottlieb groups of  function spaces.   Our results lead to  explicit  decompositions of the Gottlieb groups of many function spaces $\map(X,Y)$---including the (iterated) free loop space of $Y$---directly in terms of the Gottlieb groups of $Y$.  More generally, we give explicit decompositions of the generalized Gottlieb groups of  $\map(X,Y)$ directly in terms of generalized Gottlieb groups of $Y$.   Particular cases of our results relate to the torus homotopy groups of Fox.  We draw some consequences for the classification of $T$-spaces and $G$-spaces. For $X$, $Y$ finite and $Y$ simply connected, we give a formula for the ranks of the Gottlieb groups of $\map(X,Y)$ in terms of the Betti numbers of $X$ and the ranks of the Gottlieb groups of $Y$.  Under these hypotheses,  the Gottlieb groups of $\map(X,Y)$ are finite groups in all but finitely many degrees.
\end{abstract}

\thanks{This work was partially supported by a grant from the Simons Foundation (\#209575 to Gregory Lupton).}

\maketitle

\section{Introduction: Description of Results}%
\label{sec:intro}

Let  $\aut  Y$ denote the component of the function space of (unbased) self-homotopy equivalences of $Y$ that consists of self maps (freely) homotopic to the identity map of $Y$.  For $Y$  a based space, evaluation at the basepoint of $Y$ gives the
\emph{evaluation map} $\omega \colon \aut Y\rightarrow Y$, which induces a homomorphism on $n$th homotopy groups
$$\omega_{\#}\colon \pi_n\big(\aut Y \big) \to \pi_n(Y).$$
The \emph{$n$th Gottlieb group of $Y$}, denoted $G_{n}(Y)$, 
is the image of $\omega_{\#}$ in $\pi_n(Y)$ \cite{Go1}.

Because $\omega\colon \aut Y \rightarrow Y$ may be identified
with the connecting map of the universal fibration for fibrations
with fibre $Y$, the Gottlieb groups are important universal
objects that feature in  a variety of contexts.    
Recently, for example, they have appeared as objects of interest in results in symplectic topology \cite[Lem.2.2]{McD08}, and in string topology \cite[Th.2]{F-T-V04}.  
Unfortunately, it has proved difficult to calculate
Gottlieb groups.  In part, this may be due to a lack of functoriality:  a map $f\colon
X \to Y$ in general does not satisfy $f_\#(G_n(X)) \subseteq G_n(Y)$.

Our goal in this paper is to analyze the Gottlieb groups of function spaces, including important cases such as the free loop space.  The prototype of our results is the following.  Let $\Lambda Y = \map(S^1, Y;0)$ denote the (null component of the) free loop space of $Y$. 

\begin{introtheorem}\label{introthm: Gottlieb free loop}
For $n \geq 1$, we have 
 $$G_n(\Lambda  Y) \cong  G_{n}(Y) \oplus G_{n+1}(Y).$$
\end{introtheorem}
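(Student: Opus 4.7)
The approach exploits the evaluation fibration $e_0 \colon \Lambda Y \to Y$, $\lambda \mapsto \lambda(1)$, which has fiber $\Omega Y$ and is split by the constant-loop inclusion $s \colon Y \to \Lambda Y$. This induces the splitting $\pi_n(\Lambda Y) \cong \pi_n(Y) \oplus \pi_{n+1}(Y)$ under which any $\alpha$ decomposes as $\alpha = s_\#(\beta) + \iota_\#(\overline\gamma)$ with $\beta = e_{0,\#}(\alpha) \in \pi_n(Y)$ and $\overline\gamma \in \pi_n(\Omega Y) \cong \pi_{n+1}(Y)$. The plan is to show that $G_n(\Lambda Y)$ corresponds precisely to $G_n(Y) \oplus G_{n+1}(Y)$ under this splitting.

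For the inclusion $G_n(Y) \oplus G_{n+1}(Y) \subseteq G_n(\Lambda Y)$ I use the standard characterization that $\alpha \in G_n(X)$ iff there is an \emph{affiliated map} $A\colon S^n \times X \to X$ with $A(*,x) = x$ and $A(x,*) = \alpha(x)$. Given $\beta \in G_n(Y)$ with affiliate $A_\beta$, the pointwise formula $(x,\lambda) \mapsto [t \mapsto A_\beta(x,\lambda(t))]$ is an affiliate on $\Lambda Y$ for $s_\#(\beta)$. Given $\gamma \in G_{n+1}(Y)$ with affiliate $A_\gamma$, precompose with the quotient $S^n \times S^1 \to S^{n+1}$ to obtain $\widehat A_\gamma \colon S^n \times S^1 \times Y \to Y$; then $(x,\lambda) \mapsto [t \mapsto \widehat A_\gamma(x,t,\lambda(t))]$ gives an affiliate for $\iota_\#(\overline\gamma)$. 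Both verifications are routine adjunction computations.

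For the reverse inclusion, fix an affiliate $H\colon S^n \times \Lambda Y \to \Lambda Y$ for a given $\alpha \in G_n(\Lambda Y)$. The $\beta$-component admits a clean treatment via the natural transformation $\Phi\colon \aut(\Lambda Y) \to \aut Y$, $\Phi(f) = e_0 \circ f \circ s$: since $f \simeq \mathrm{id}_{\Lambda Y}$ implies $\Phi(f) \simeq e_0 \circ s = \mathrm{id}_Y$, this is well-defined, and the identity $e_0 \circ \omega_{\Lambda Y} = \omega_Y \circ \Phi$ makes the square of evaluations commute strictly. Consequently $e_{0,\#}$ carries $G_n(\Lambda Y)$ into $G_n(Y)$, yielding $\beta \in G_n(Y)$ with explicit affiliate $F_\beta(x,y) = H(x, c_y)(1)$.

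The main obstacle is showing $\overline\gamma$ lies in the image of $G_{n+1}(Y) \to \pi_{n+1}(Y)$. My candidate is the map $\overline H\colon S^n \times S^1 \times Y \to Y$ given by $\overline H(x,t,y) = H(x,c_y)(t)$, which restricts to $F_\beta$ on $S^n \times \{1\} \times Y$, to the projection on $\{*\} \times S^1 \times Y$, and to the adjoint $\widehat\alpha$ of $\alpha$ on $S^n \times S^1 \times \{*\}$. An affiliate $F_\gamma\colon S^{n+1} \times Y \to Y$ would correspond to $\overline H$ descending through the quotient $S^n \times S^1 \to S^{n+1}$; this requires its restriction to $(S^n \vee S^1) \times Y$ to be the projection, whereas in fact it equals $F_\beta \circ (r \times \mathrm{id}_Y)$ for the canonical retraction $r\colon S^n \vee S^1 \to S^n$. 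I plan to \emph{splice away} this $\beta$-contribution by pasting $\overline H$ with a reparametrization of $F_\beta$ across the cofibration $(S^n \vee S^1) \times Y \hookrightarrow (S^n \times S^1) \times Y$ via the homotopy extension property, producing a homotopic map whose boundary restriction is trivial and which therefore descends to the desired $F_\gamma$. The difficulty is that $Y$ is not generally an $H$-space, so this subtraction must be executed geometrically; ensuring that the rectified map realizes $\gamma$ on $S^{n+1} \times \{*\}$ while fixing $\{*\} \times Y$ pointwise is the technical heart of the argument.
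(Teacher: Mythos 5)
Your easy inclusion $G_n(Y)\oplus G_{n+1}(Y)\subseteq G_n(\Lambda Y)$ and the extraction of the $\beta$-component are fine, and correctly mirror the mechanism the paper uses (the sectioned evaluation fibration $e_0\colon \Lambda Y\to Y$ and the fact that a map with a right homotopy inverse preserves Gottlieb groups). The genuine gap is in the last step, and it is exactly where you locate it: the rectification of $\overline H$ cannot be carried out as described. Any map homotopic to $\overline H$ has restriction to $(S^n\vee S^1)\times Y$ \emph{homotopic} to $F_\beta\circ(r\times \mathrm{id}_Y)$, and restricting further to $S^n\times\{1\}\times\{y_0\}$ shows that this restriction carries the homotopy class $\beta\in\pi_n(Y)$ as an invariant. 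Hence no application of the homotopy extension property can make the boundary restriction equal to the projection unless $\beta=0$; and ``pasting with a reparametrization of $F_\beta$'' amounts to cancelling $F_\beta$ against itself inside $Y$, which requires an inverse that $Y$ (not being an $H$-space) does not supply. So the construction of $F_\gamma$ does not go through for a general $\alpha$ with $e_{0\#}(\alpha)\neq 0$.

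The repair is to perform the subtraction not on $\overline H\colon S^n\times S^1\times Y\to Y$ but on its adjoint $S^n\times S^1\to \aut Y$, $(x,t)\mapsto H(x,c_{(-)})(t)$, where the monoid/H-space structure of $\aut Y$ makes $[S^n\times S^1,\aut Y]$ an abelian group in which the class of $\widetilde F_\beta\circ \mathrm{pr}_{S^n}$ can be subtracted off; the difference restricts trivially to $S^n\vee S^1$, lifts to $\pi_{n+1}(\aut Y)$, and maps under $\omega_\#$ to the desired $\gamma\in G_{n+1}(Y)$. This is precisely what the paper's argument packages: the assignment $\psi\mapsto\big((x,?)\mapsto \psi(C_{(-)})(x)\big)$ is the retraction $r\colon \aut\map(X,Y)\to\map(X,\aut Y)$ of Lemma~\ref{lem: retractions}, and since $\omega_{\map(X,Y)}=\omega_{\map(X,Y)}\circ\Phi\circ r$, every Gottlieb element of $\map(X,Y;0)$ is realized through $\map(X,\aut Y)$, where the sectioned fibration over $\aut Y$ splits the problem \emph{before} applying $\omega$. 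Note also that the paper proves the statement for arbitrary $X$ (and generalized Gottlieb groups), obtaining Theorem~\ref{introthm: Gottlieb free loop} as the case $X=S^1$; your argument, even once completed via $\aut Y$, is specific to the circle.
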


Note that Gottlieb groups are abelian; the sum in the above is the direct sum of abelian groups.
This basic result may be generalized greatly.  We indicate the various generalizations in this introduction; the actual theorems in the body of the paper are more general than the samples we now give.

In fact we are able to give a similar decomposition for the (generalized) Gottlieb groups of a general function space.   
Let $\map(X,Y)$ denote the component of the null map in the function space of (unbased) maps from $X$ to $Y$.  We will often write this as $\map(X,Y;0)$, to emphasize that we are dealing only with null components here.  In what follows, our hypotheses on the spaces $X$ and $Y$ are very mild: they are specified   at the start of \secref{sec: Gottlieb function space}.  We assume $X$ and $Y$ are connected, but do not require any higher connectivity or finiteness hypotheses.     
For a suspension $\Sigma A$, denote by $[\Sigma A, Z]$ the group of based homotopy classes of based maps $\Sigma A \to Z$.  Then post-composition with the evaluation map $\omega\colon \aut Z \to Z$ induces a homomorphism of groups of homotopy classes
$$\omega_*\colon [\Sigma A, \aut Z] \to [\Sigma A, Z],$$
whose image we denote by $\mathcal{G}(\Sigma A, Y)$, and call a \emph{generalized Gottlieb group} (see \cite{Var69}).    Because $\aut Z$ is an H-space, the group $[\Sigma A, \aut Z]$ is abelian.  Therefore, this latter description makes clear that $\mathcal{G}(\Sigma A, Z)$ is an abelian subgroup of $[\Sigma A, Z]$.   For the space $A$, which always appears suspended, we allow it to be disconnected, so that, for example, we may obtain a (bouquet of) circle(s) in $\Sigma A$.  Notice that, by taking $A = S^{n-1}$ for $n \geq 1$, we obtain $\mathcal{G}(\Sigma S^{n-1}, Z) = G_n(Z)$, the ordinary Gottlieb group.

\begin{introtheorem}[\corref{cor: Gottlieb function space}]%
\label{thm: intro Gottlieb function space} 
For any $A$, we have an isomorphism of abelian groups
$$\mathcal{G}\big(\Sigma A , \map(X, Y;0)\big) \cong \mathcal{G}(\Sigma A, Y) \oplus \mathcal{G}(\Sigma(A \wedge X), Y).$$
In particular, for each $n \geq 1$, we have an isomorphism of abelian groups
$$G_n\big(\map(X, Y;0)\big) \cong G_n(Y) \oplus \mathcal{G}(\Sigma^{n} X, Y).$$
\end{introtheorem}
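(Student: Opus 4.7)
The plan is to factor the proof through the pointed exponential law, then split a half-smash and invoke additivity of generalized Gottlieb groups over wedges. Since any based $\phi\colon \Sigma A \to \map(X,Y;0)$ sends $*$ to the constant map $0$, its adjoint $\phi^\vee\colon \Sigma A \times X \to Y$ collapses $\{*\} \times X$ to $y_0$, giving the natural bijection $[\Sigma A, \map(X,Y;0)] \cong [\Sigma A \wedge X_+, Y]$. The central step is to upgrade this to an isomorphism of generalized Gottlieb groups
$$\mathcal{G}\bigl(\Sigma A, \map(X,Y;0)\bigr) \;\cong\; \mathcal{G}\bigl(\Sigma A \wedge X_+, Y\bigr).$$

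For the forward direction, suppose $\phi = \omega \circ \widetilde\phi$ with $\widetilde\phi\colon \Sigma A \to \aut(\map(X,Y;0))$, and let $c\colon Y \to \map(X,Y;0)$ send $y$ to the constant map $c_y$ (well-defined into the null component since $Y$ is path-connected). Define
$$\Psi\colon \Sigma A \times X \times Y \to Y, \qquad \Psi(\sigma, x, y) \;=\; \widetilde\phi(\sigma)(c_y)(x).$$
Since $\widetilde\phi(*) = \mathrm{id}$, we have $\Psi(*, x, y) = y$, so $\Psi$ descends to $\bar\Psi\colon \Sigma A \wedge X_+ \times Y \to Y$ with $\bar\Psi(s, y_0) = \phi^\vee(s)$; its adjoint is the desired lift $\Sigma A \wedge X_+ \to \aut Y$ of $\phi^\vee$ through $\omega$. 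Conversely, a lift $\psi^\vee\colon \Sigma A \wedge X_+ \times Y \to Y$ of $\phi^\vee$ yields a lift of $\phi$ via $\bar\phi(\sigma, f)(x) = \psi^\vee([\sigma, x], f(x))$; this satisfies $\bar\phi(*, -) = \mathrm{id}$ and $\bar\phi(-, 0) = \phi$, and its adjoint lands in $\aut(\map(X,Y;0))$ because $\Sigma A$ is path-connected.

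To finish, I would apply two standard ingredients. For well-pointed $X$, the retraction $X_+ \to S^0$ splits the cofiber sequence $S^0 \hookrightarrow X_+ \to X$ after smashing with $\Sigma A$, yielding a based homotopy equivalence $\Sigma A \wedge X_+ \simeq \Sigma A \vee \Sigma(A \wedge X)$. Second, since $\aut Y$ is an H-space, the universal property of the wedge gives $\mathcal{G}(\Sigma B_1 \vee \Sigma B_2, Y) \cong \mathcal{G}(\Sigma B_1, Y) \oplus \mathcal{G}(\Sigma B_2, Y)$. Combining these with the Gottlieb-level adjunction produces the desired isomorphism $\mathcal{G}(\Sigma A, \map(X,Y;0)) \cong \mathcal{G}(\Sigma A, Y) \oplus \mathcal{G}(\Sigma(A \wedge X), Y)$, and specializing to $A = S^{n-1}$ yields the corollary (the case $n=1$ requires the disconnected choice $A = S^0$).

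I expect the principal technical point to be the Gottlieb-level adjunction itself: one must verify that the two constructions above are mutually inverse at the level of homotopy classes of lifts, and that $\sigma \mapsto \bar\phi(\sigma, -)$ genuinely defines a map into $\aut(\map(X,Y;0))$. This last point reduces to the fact that $\aut Z$ is, by definition, the path-component of $\mathrm{id}_Z$ in the self-map monoid, together with the observation that $\bar\phi$ sends the basepoint of $\Sigma A$ to the identity; some care is also needed so that well-pointedness of $X$ is invoked precisely where the wedge splitting requires it.
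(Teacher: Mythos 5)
Your proof is correct, and it is in essence the paper's argument transposed through the exponential law rather than an independent one. The paper's key lemma exhibits $(\omega_Y)_*\colon\map(X,\aut Y)\to\map(X,Y)$ as a retract of $\omega_{\map(X,Y)}\colon\aut\map(X,Y)\to\map(X,Y)$ via explicit maps $\Phi(g)(f)(x)=g(x)(f(x_0))$ and $r(\psi)(x)(b)=\psi(C_b)(x)$; your two lifting constructions are exactly the adjoints of these (your $\Psi(\sigma,x,y)=\widetilde\phi(\sigma)(c_y)(x)$ is $r$, and your $\bar\phi(\sigma,f)(x)=\psi^\vee([\sigma,x],f(x))$ is a variant of $\Phi$ with $f(x)$ in place of $f(x_0)$ --- either choice works). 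Where you split the half-smash $\Sigma A\wedge X_+\simeq\Sigma A\vee\Sigma(A\wedge X)$ via the retractible cofibre sequence $S^0\to X_+\to X$, the paper instead applies $[\Sigma A,-]$ to the sectioned evaluation fibration $\map_*(X,\aut Y)\to\map(X,\aut Y)\to\aut Y$ lying over the corresponding fibration for $Y$, and takes the image of one split short exact sequence in the other; these two mechanisms are adjoint to each other. Your packaging has the merit of isolating the clean intermediate statement $\mathcal{G}(\Sigma A,\map(X,Y;0))\cong\mathcal{G}(\Sigma(A\wedge X_+),Y)$; the paper's has the merit that the group structures and the naturality needed to compare images are carried automatically by the ladder of Barratt--Puppe sequences, whereas you still owe the (routine) verifications you yourself flag: that the adjunction bijection $[\Sigma A,\map(X,Y;0)]\cong[\Sigma(A\wedge X_+),Y]$ is a group isomorphism, that your constructions descend to homotopy classes of lifts, and that the adjoints land in $\aut Y$, respectively $\aut\map(X,Y;0)$, which follows as you say from path-connectedness of $\Sigma A\wedge X_+$, respectively $\Sigma A$, together with $\widetilde\phi(*)=\mathrm{id}$ holding on the nose because lifts are based maps. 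I find no gap.
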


\thmref{introthm: Gottlieb free loop} follows from this result by setting $X = S^1$.  Another interesting special case is given by setting $X$ a wedge of spheres, which yields the following identity (included in \thmref{thm: General Gottlieb group}):

\begin{introcorollary}
For $X = S^{i_1} \vee \cdots \vee S^{i_k}$, and for $n \geq 1$, we have an isomorphism
$$G_n\big(\map(X,Y) \big) \cong G_n(Y) \oplus \bigoplus_{r = 1, \dots, k} G_{n+i_r}(Y).$$
\end{introcorollary}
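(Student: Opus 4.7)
The plan is to reduce to \thmref{thm: intro Gottlieb function space} and then analyze the summand $\mathcal{G}(\Sigma^n X, Y)$ using the wedge structure of $X$. For any $X$ and any $n \geq 1$, the second assertion of \thmref{thm: intro Gottlieb function space} gives
$$G_n\big(\map(X,Y)\big) \cong G_n(Y) \oplus \mathcal{G}(\Sigma^{n} X, Y),$$
so the task reduces to identifying $\mathcal{G}(\Sigma^{n} X, Y)$ with $\bigoplus_{r=1}^{k} G_{n+i_r}(Y)$ when $X = S^{i_1} \vee \cdots \vee S^{i_k}$.

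Next I would use that reduced suspension distributes over wedges to obtain $\Sigma^{n} X \cong S^{n+i_1} \vee \cdots \vee S^{n+i_k}$. Then I would establish a wedge-additivity property for the generalized Gottlieb group: if $B_1 = \Sigma A_1$ and $B_2 = \Sigma A_2$, then because the wedge is the coproduct in pointed homotopy, the natural bijection $[B_1 \vee B_2, W] \cong [B_1, W] \times [B_2, W]$ is compatible with post-composition by $\omega \colon \aut Y \to Y$ applied coordinatewise. Taking images therefore splits as a product, yielding
$$\mathcal{G}(\Sigma A_1 \vee \Sigma A_2, Y) \cong \mathcal{G}(\Sigma A_1, Y) \oplus \mathcal{G}(\Sigma A_2, Y).$$

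Iterating this identity and using the identification $\mathcal{G}(S^{m}, Y) = G_m(Y)$ for $m \geq 1$ already noted in the introduction, I obtain
$$\mathcal{G}(\Sigma^{n} X, Y) \cong \bigoplus_{r=1}^{k} \mathcal{G}(S^{n+i_r}, Y) = \bigoplus_{r=1}^{k} G_{n+i_r}(Y),$$
which combined with the first display gives the stated decomposition.

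There is essentially no obstacle: once \thmref{thm: intro Gottlieb function space} is granted, the argument is purely formal. The only point that warrants any care is the wedge-additivity claim, but this is immediate from the universal property of the wedge together with the coordinatewise nature of the homomorphism $\omega_*$ whose image defines $\mathcal{G}(-, Y)$. An alternative route, which yields the same conclusion with the same effort, would be to apply the first (stronger) isomorphism of \thmref{thm: intro Gottlieb function space} with $A = S^{n-1}$ and to unwind $A \wedge X \simeq \bigvee_{r} S^{n-1+i_r}$ using that smash distributes over wedges.
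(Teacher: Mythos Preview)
Your proof is correct and follows essentially the same route as the paper: apply \thmref{thm: intro Gottlieb function space} (equivalently \corref{cor: Gottlieb function space}) to reduce to identifying $\mathcal{G}(\Sigma^n X, Y)$, then use that $\Sigma^n X$ is a wedge of spheres together with the natural splitting $[\Sigma^n X, Z] \cong \bigoplus_r [S^{n+i_r}, Z]$ (compatible with $(\omega_Y)_*$) to decompose it as $\bigoplus_r G_{n+i_r}(Y)$. Your justification of the wedge-additivity step is slightly more explicit than the paper's, but the argument is the same.
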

\noindent{}For $X$ a single sphere, of course, we have  $G_n\big(\map(S^p, Y;0)\big) \cong G_n(Y) \oplus G_{n+p}(Y)$.

For $X$ a product of spaces, \thmref{thm: intro Gottlieb function space} may be applied directly.  However, in this case, it is also possible to use the result iteratively, so as to decompose the right-hand sides of the identity into simpler pieces.   This approach leads, in particular,  to an explicit decomposition of the Gottlieb groups of function spaces of the form
$$\map\big(  (S^{a_1} \vee \cdots \vee S^{a_k}) \times \overset{N\textrm{-}\mathrm{times}}{\cdots} \times 
(S^{b_1} \vee \cdots \vee S^{b_l}),   Y ; 0 \big)$$
 directly in terms of the Gottlieb groups of $Y$.  Also, in these cases, some interesting combinatorial expressions arise.  We give one such decomposition here, which is included in \corref{cor: Gottlieb iterated bouquet}.  \secref{sec:Free Bouquets} gives more general results, along similar lines.
 
\begin{introtheorem}%
\label{thm: intro Gottlieb iterated bouquet} 
For $N \geq 1$,  let $\Lambda ^N(Y)$ denote the iterated free loop space of $Y$ (see \secref{sec:Free Bouquets} for a definition).  Then, for each $n \geq 1$,  we have%
$$G_n\big(\Lambda ^N(Y)\big) \cong \bigoplus_{j = 0}^{N}  {N\choose j} G_{n+j}(Y).$$
\end{introtheorem}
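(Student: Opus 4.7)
The plan is to prove the identity by induction on $N$, using \thmref{introthm: Gottlieb free loop} as both the base case and the inductive engine. Assuming the iterated free loop space is defined via $\Lambda^N(Y) = \Lambda\bigl(\Lambda^{N-1}(Y)\bigr)$ (with $\Lambda^0(Y) = Y$), the base case $N=1$ reads $G_n(\Lambda Y) \cong G_n(Y) \oplus G_{n+1}(Y)$, which matches the right-hand side since $\binom{1}{0} = \binom{1}{1} = 1$, and this is exactly \thmref{introthm: Gottlieb free loop}.

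For the inductive step, I would apply \thmref{introthm: Gottlieb free loop} with $Y$ replaced by $\Lambda^{N-1}(Y)$ to obtain
$$G_n\bigl(\Lambda^N(Y)\bigr) \cong G_n\bigl(\Lambda^{N-1}(Y)\bigr) \oplus G_{n+1}\bigl(\Lambda^{N-1}(Y)\bigr).$$
Then, applying the inductive hypothesis to each summand, the first term becomes $\bigoplus_{j=0}^{N-1} \binom{N-1}{j} G_{n+j}(Y)$ and the second becomes $\bigoplus_{j=0}^{N-1} \binom{N-1}{j} G_{n+1+j}(Y)$. Re-indexing the second sum with $j \mapsto j-1$ turns it into $\bigoplus_{j=1}^{N} \binom{N-1}{j-1} G_{n+j}(Y)$. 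Collecting coefficients of $G_{n+j}(Y)$ and invoking Pascal's identity $\binom{N-1}{j} + \binom{N-1}{j-1} = \binom{N}{j}$ (with the endpoint values $\binom{N-1}{0} = \binom{N}{0}$ and $\binom{N-1}{N-1} = \binom{N}{N}$ taking care of $j=0$ and $j=N$) gives the stated decomposition.

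There is no serious obstacle here; the argument is essentially a combinatorial unwinding once the recursive application of \thmref{introthm: Gottlieb free loop} is in place. The only thing to verify carefully is that $\Lambda^N(Y)$ is indeed being interpreted iteratively as $\Lambda(\Lambda^{N-1}(Y))$, so that \thmref{introthm: Gottlieb free loop} applies at each stage; this should be the definition provided in \secref{sec:Free Bouquets}. As a sanity check, one could also derive the formula non-inductively from \thmref{thm: intro Gottlieb function space} applied to $X = T^N$: using the classical suspension splitting $\Sigma^n T^N \simeq \bigvee_{j=1}^{N} \binom{N}{j} S^{n+j}$ and the fact that generalized Gottlieb groups of wedges of spheres split as direct sums of ordinary Gottlieb groups, one recovers the same answer, providing independent confirmation.
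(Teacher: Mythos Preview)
Your proposal is correct and follows essentially the same route as the paper: the proof of \corref{cor: Gottlieb iterated bouquet} (which contains this statement as the case $m=1$) is precisely this induction on $N$, using the $N=1$ case as both base and inductive engine, and then Pascal's identity to combine the two resulting sums. The only cosmetic difference is that the paper carries the extra parameter $m$ through the argument and invokes \thmref{thm: General Gottlieb group} rather than \thmref{introthm: Gottlieb free loop} at each stage, but for $m=1$ this is exactly what you wrote.
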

\noindent{}In these formulas, a notation of the form $k G$, for $k \geq 1$ an integer and $G$ an abelian group, denotes the direct sum of $k$ copies of $G$.  These decompositions are quite remarkable, given the apparent difficulty of identifying Gottlieb groups for ``small" spaces such as manifolds, or  cell complexes with few cells, including spheres (see \cite{Go-Mu08}).   

The iterative use of \thmref{thm: intro Gottlieb function space} depends on the exponential law, whereby we we may identify
the function spaces $\map(A, \map(B, Y))$ and $\map(A \times B, Y)$.   In the case of the iterated free loop space $\Lambda ^N(Y)$, repeated use of this allows us to identify $\Lambda ^N(Y)$ with the function space $\map(T^N, Y; 0)$, where $T^N$ denotes the $N$-torus $S^1 \times \cdots \times S^1$.   So the identity of \thmref{thm: intro Gottlieb iterated bouquet} gives $G_n\big(\map(T^N,Y;0) \big)$ in terms of $G_*(Y)$.  In this case, there is a relation between our identities here, and similar identities noticed by Fox and others, concerning the so-called \emph{Fox torus homotopy groups} \cite{Fox}, and the \emph{Fox-Gottlieb groups} introduced in \cite{G-G-W}.  We discuss the connection in \secref{sec:Free Bouquets}.

In a final generalization of our basic result,  we relativize \thmref{introthm: Gottlieb free loop}.  This begins with replacing the free loop space $\Lambda  Y$ with a pullback of the free loop fibration $\Lambda  Y \to Y$ over a map $f\colon X \to Y$, which we denote $L_f Y$. In this setting, we obtain a relation that involves the so-called \emph{relative Gottlieb groups}, or the \emph{evaluation subgroups of a map}.  We review the definition of these groups now, before indicating our result. 

Let $f \colon X \rightarrow Y$ be a based map. Denote by $\map(X, Y; f)$ the path component of the
space of (unbased) maps $X\to Y$ that consists of maps (freely)
homotopic to $f$. Evaluation at the basepoint of $X$ gives the
\emph{evaluation map} $\omega \colon \map(X, Y; f)\rightarrow Y$.
The \emph{$n$th evaluation subgroup of $f$}, denoted $G_{n}(Y, X;
f)$, is the image of $\omega_{\#}$ in $\pi_n(Y)$ (cf.~\cite[p.731]{Go1}). The Gottlieb
group $G_{n}(Y)$ occurs as the special case in which $X=Y$ and $f=
1_{Y}$.

From the construction of $L_fY$, we obtain a canonical  ``whisker" map $\phi\colon \Lambda X \to L_fY$.  Our basic result in this relative setting, which may be viewed as a relative version of the prototypical  
\thmref{introthm: Gottlieb free loop}, is the following.  

\begin{introtheorem}[\thmref{thm: relative Gottlieb free loop}]%
\label{thm:intro relative Gottlieb}
For each $n \geq 2$, we have an isomorphism of abelian groups
$$G_n\big(L_fY, \Lambda  X; \phi \big) \cong G_n(X) \oplus G_{n+1}(Y,X;f).$$
\end{introtheorem}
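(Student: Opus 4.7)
The plan is to adapt the strategy used for \thmref{introthm: Gottlieb free loop} to the relative setting afforded by the pullback $L_fY = X \times_Y \Lambda Y$. Since $L_fY$ is the strict pullback of the evaluation fibration $\omega_Y \colon \Lambda Y \to Y$ along $f$, the induced fibration $\Omega Y \to L_fY \to X$ inherits the constant-loops section of $\omega_Y$, yielding a natural splitting
$$\pi_n(L_fY) \cong \pi_n(X) \oplus \pi_{n+1}(Y).$$
My aim is to show that the image of $\omega_\# \colon \pi_n\bigl(\map(\Lambda X, L_fY; \phi)\bigr) \to \pi_n(L_fY)$ matches $G_n(X) \oplus G_{n+1}(Y, X; f)$ under this identification.

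For the forward containment, I would analyze an arbitrary $F \colon S^n \to \map(\Lambda X, L_fY; \phi)$ via its exponential-law adjoint $\widehat F \colon S^n \times \Lambda X \to L_fY$. Write $p_1 \colon L_fY \to X$ and $p_2 \colon L_fY \to \Lambda Y$ for the pullback projections. Since $p_1 \circ \phi = \omega_X$, restricting $p_1 \widehat F$ to $S^n \times \{c_0\}$ presents the $X$-coordinate of $\omega_\#[F]$ as an element of $G_n(X)$. For the $Y$-coordinate, composing with $p_2$ and adjointing the $S^1$-factor in $\Lambda Y$ yields a map $G \colon S^{n+1} \times \Lambda X \to Y$ whose restriction to $\{\ast\} \times \Lambda X$ equals $f \omega_X$ directly from the commutative pullback square, so the resulting class in $\pi_{n+1}(Y)$ lies in $G_{n+1}(Y, X; f)$.

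For the reverse containment, I would realize each summand by an explicit construction. Given $\alpha \in G_n(X)$ represented by a cyclic map $\widetilde A \colon S^n \times X \to X$ together with a chosen free homotopy $h \colon S^n \times X \times I \to X$ from $\mathrm{id}_X$ to $\widetilde A$, define $F_\alpha(s)(\gamma) = \bigl(\widetilde A(s, \gamma(0)),\, f \circ h_s \circ \gamma\bigr)$; this lies in $L_fY$ because $f \circ h_s \circ \gamma$ is a loop at $f(\widetilde A(s, \gamma(0)))$, it restricts to $\phi$ at $s = \ast$, and a direct computation gives $\omega_\#[F_\alpha] = (\alpha, 0)$. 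Given $\beta \in G_{n+1}(Y, X; f)$ represented by $B \colon S^{n+1} \times X \to Y$ with exponential-law adjoint $\widetilde B \colon S^n \times X \to \Lambda Y$, set $F_\beta(s)(\gamma) = \bigl(\gamma(0),\, f \gamma \cdot \widetilde B(s, \gamma(0))\bigr)$ via loop concatenation; a parallel check yields $\omega_\#[F_\beta] = (0, \beta)$.

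The main technical hurdle lies in verifying that these constructions genuinely produce maps into the specified path-component $\map(\Lambda X, L_fY; \phi)$, and that the families $F_\alpha$ and $F_\beta$ combine additively without interference. In particular, the chosen homotopy $h$ must be compatible with the cyclicity of $\widetilde A$ so that the equality $F_\alpha(\ast) = \phi$ holds on the nose, and the loop-concatenation in $F_\beta$ must be reparametrized so that $F_\beta(\ast) = \phi$ as well. Once these compatibilities are checked, the forward and reverse constructions are mutually inverse, and the claimed isomorphism follows.
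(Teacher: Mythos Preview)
Your strategy---show both containments by explicit element-chasing---is genuinely different from the paper's, and while the overall shape is reasonable, two of the steps do not work as written.

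First, in the forward direction for the $Y$-coordinate, the claim that ``adjointing the $S^1$-factor in $\Lambda Y$ yields a map $G\colon S^{n+1}\times\Lambda X\to Y$'' is false.  Adjointing $p_2\widehat F\colon S^n\times\Lambda X\to\Lambda Y$ gives a map out of $S^1\times S^n\times\Lambda X$, not $S^{n+1}\times\Lambda X$.  To reach $S^{n+1}$ you must first precompose with the section $\sigma_X\colon X\to\Lambda X$ (so that the restriction to $S^1\times\{*\}\times X$ becomes constant in the $S^1$-direction), then collapse to the half-smash $(S^1\times S^n)/(S^1\times *)\simeq S^n\vee S^{n+1}$, and finally restrict to the $S^{n+1}$ summand.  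None of this is indicated, and moreover you still have to argue that this $S^{n+1}$-restriction computes the correct $\pi_{n+1}(Y)$-component of $\omega_\#[F]$ under the splitting of $\pi_n(L_fY)$---which requires tracking the section $\sigma_f$, since $p_2$ alone does not project cleanly onto the $\Omega Y$-summand.

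Second, your construction of $F_\alpha$ is ill-formed: you write ``$f\circ h_s\circ\gamma$'' with $h\colon S^n\times X\times I\to X$ and $\gamma\colon S^1\to X$, but the composite does not type-check, and the role of the ``free homotopy $h$ from $\mathrm{id}_X$ to $\widetilde A$'' is unclear (cyclicity already gives $\widetilde A|_{\{*\}\times X}=1_X$).  A much simpler formula works: set $F_\alpha(s)(\gamma)=\bigl(\widetilde A(s,\gamma(0)),\,f\circ\widetilde A(s,-)\circ\gamma\bigr)$; no auxiliary homotopy is needed, and the verification that $F_\alpha(*)=\phi$ and $\omega_\#[F_\alpha]=(\alpha,0)$ is then immediate.

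By contrast, the paper avoids all element-level constructions.  It builds a ladder of \emph{split} fibre sequences with $\Omega\map(X,Y;f)\to L_{f_*}\map(X,Y;f)\to\aut X$ on top and $\Omega Y\to L_fY\to X$ on the bottom, identifies $L_{f_*}\map(X,Y;f)\equiv\map(X,L_fY;\sigma_f)$, and exhibits a retraction of this onto $\map(\Lambda X,L_fY;\phi)$.  Applying $\pi_n$ and invoking the elementary fact that the image of one split short exact sequence in another is again split short exact yields the decomposition in one stroke.  This structural argument is both shorter and more robust; your approach could be completed, but the verifications you flag as ``the main technical hurdle'' are genuinely nontrivial and are precisely what the paper's retraction diagram sidesteps.
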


As with \thmref{thm: intro Gottlieb function space}, it is possible to handle iterates of this relativized construction, although we do not develop this direction here.

Our results lead to a strong consequence for the global structure of the Gottlieb groups of a function space.  Let $\beta_i(X)$ denote the $i$th Betti number of $X$, and $\gamma_i(Y)$---which we propose calling the $i$th \emph{Gottlieb number} of $Y$---denote the rank (as an abelian group) of $G_i(Y)$. We deduce the following formula:

\begin{introtheorem}\label{thm: gamma formula}
Let $X$ and $Y$ be finite complexes, with $Y$ simply connected.  Then we have 
$$\gamma_n\big( \map(X,Y;0) \big) = \sum_{i=0}^{\textrm{dim}\,X}\ \beta_i(X) \gamma_{n+i}(Y),$$
for $n \geq 1$, where $\textrm{dim}\,X$ is the dimension of $X$.
\end{introtheorem}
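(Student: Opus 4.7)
The plan is to combine the Corollary to \introthmref{thm: intro Gottlieb function space} with standard rational homotopy theory. That corollary gives
$$G_n\big(\map(X,Y;0)\big) \cong G_n(Y) \oplus \mathcal{G}(\Sigma^n X, Y),$$
and since $X$ is connected we have $\beta_0(X) = 1$, so the asserted identity reduces to proving
$$\mathrm{rank}\,\mathcal{G}(\Sigma^n X, Y) = \sum_{i=1}^{\dim X} \beta_i(X)\, \gamma_{n+i}(Y).$$

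First I would record that the finiteness hypotheses make all groups in sight finitely generated abelian: $\pi_*(Y)$ and $\pi_*(\aut Y)$ are finitely generated because $Y$ is finite and simply connected (with $\aut Y$ nilpotent of finite type), and the sets $[\Sigma^n X, Y]$ and $[\Sigma^n X, \aut Y]$ are finitely generated groups (the second abelian since $\aut Y$ is an H-space). Hence $\mathcal{G}(\Sigma^n X, Y)$ has a well-defined finite rank. The essential geometric input is that $\Sigma^n X$ is a simply connected suspension of a finite complex, so its reduced rational cohomology has trivial cup product; by the standard coformality of suspensions in rational homotopy theory, there is a rational equivalence
$$\Sigma^n X \simeq_\Q \bigvee_{i=1}^{\dim X}\ \bigvee_{\beta_i(X)} S^{n+i}.$$

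Next I would exploit the natural splitting $[\Sigma A \vee \Sigma B, Z] \cong [\Sigma A, Z] \oplus [\Sigma B, Z]$, which is compatible with post-composition by $\omega\colon \aut Y \to Y$ and yields $\mathcal{G}(\bigvee_k \Sigma A_k, Y) \cong \bigoplus_k \mathcal{G}(\Sigma A_k, Y)$. Combined with the rational wedge-of-spheres decomposition above and the identification $\mathcal{G}(S^{n+i}, Y) = G_{n+i}(Y)$, this delivers the required rank identity, with each sphere $S^{n+i}$ contributing an ordinary Gottlieb-group summand and the $i=0$ term $\beta_0(X)\gamma_n(Y) = \gamma_n(Y)$ supplied by the first summand of the corollary.

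The main obstacle is making the rationalization step rigorous: one must verify that $\mathrm{rank}\,\mathcal{G}(\Sigma A, Y)$ depends only on the rational homotopy type of $\Sigma A$. This rests on the classical facts that $\aut Y$ is nilpotent of finite type for $Y$ finite and simply connected, so rationalization of the target is compatible with $\omega_*$, and that $[\Sigma A,-]\otimes \Q$ is a rational homotopy invariant functor on nilpotent finite-type targets. Once this invariance is secured, the direct-sum decomposition indexed by the rational cells of $\Sigma^n X$ is automatic, completing the computation.
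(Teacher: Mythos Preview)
Your proposal is correct and follows essentially the same route as the paper: apply \corref{cor: Gottlieb function space}, rationalize, and use that $\Sigma^n X$ is rationally a wedge of spheres indexed by the Betti numbers of $X$. For the step you flag as the main obstacle, the paper rationalizes the \emph{target}, invoking results of Lang \cite{Lan75} and Woo--Kim \cite{W-K86} to identify $\mathcal{G}(\Sigma^n X, Y)\otimes\Q$ with $\mathcal{G}(\Sigma^n X, Y_\Q)$, after which a rational equivalence $\theta\colon \bigvee_i \beta_i(X)S^{n+i}\to\Sigma^n X$ induces the required isomorphism on $[\,-\,,Y_\Q]$ and on $\mathcal{G}(\,-\,,Y_\Q)$.
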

\noindent{}It follows (\corref{cor:finite Gottlieb}) that, under these hypotheses,  $G_n\big( \map(X,Y;0) \big)$  \emph{is a finite group for all but finitely many $n$}.

The paper is organized as follows.  In \secref{sec: Gottlieb function space} we set hypotheses and show our basic results, including \thmref{thm: intro Gottlieb function space}.  In \secref{sec:Free Bouquets}  we focus on the iterated bouquet spaces and establish several explicit decompositions, such as that of \thmref{thm: intro Gottlieb iterated bouquet}.  This section also includes our discussion of the Fox torus homotopy groups.  In  \secref{sec:Pullbacks}, we carry out the relativization of \thmref{introthm: Gottlieb free loop} and obtain \thmref{thm:intro relative Gottlieb}.  In \secref{sec:String} we deduce several consequences of our results, including \thmref{thm: gamma formula} above.

\section{Generalized Gottlieb Groups of Function Spaces}\label{sec: Gottlieb function space}

We begin by establishing notation and carefully specifying our hypotheses.  
We use  $X$, $Y$ and $A$ to denote based spaces,  with $x_0$ the  basepoint of $X$.  Our hypotheses on these spaces are driven by the proofs in this section: we wish to make several identifications of---based and unbased---function spaces using the exponential law, and we want evaluation maps to be fibrations---actually, to induce Barratt-Puppe sequences of homotopy sets.   To this end, we assume that spaces $X$, $Y$, and $A$ are locally compact  and  Hausdorff.  The function space $\map(X,Y)$ has the compact-open topology.
Then $X$, $Y$, and $\map(X,Y)$ are compactly generated spaces, and the identifications we use are given in Theorems 5.6 and 5.12 of \cite{Ste67}.  Note that Hausdorff-ness of $\map(X,Y)$ is inherited from $Y$.  Under our hypotheses, there is no need to adjust the topologies on the products or function spaces that arise here, in order to apply the results of \cite{Ste67}. 
  Also,  we assume that spaces $X$, $Y$, and $A$ are well-pointed, from which it follows that the evaluation maps that we use are (Hurewicz) fibrations \cite[Th.2.8.2]{Spa81}.   We suppose that spaces $X$ and $Y$ are connected but, as mentioned in the introduction, a space $A$ (or $B$) that appears  suspended may be disconnected.  Note, in particular, that we do not require any finiteness hypotheses on our spaces $X$, $Y$, or $A$, and neither do we require that $X$ or $Y$ be simply connected, or that any of these spaces be (of the homotopy type of) a  CW complex.    For based spaces, $\Sigma A$ denotes the reduced suspension, and $A \wedge X$ the smash product. 
We also use the notation and vocabulary introduced before \thmref{thm: intro Gottlieb function space} of the Introduction.

Generally speaking, we are interested in identifying  generalized Gottlieb groups of a function space, that is, the image of a homomorphism 
$$[\Sigma A, \aut \map(X,Y;0)] \to [\Sigma A, \map(X,Y;0)]$$
induced by the evaluation map of $\map(X,Y;0)$.
To this end, we have the following general result.  We emphasize that, here, $\map(X,Y)$ denotes the null component $\map(X,Y;0)$. 

\begin{lemma}\label{lem: retractions}
Let  $\ev\colon \map(X,Y) \to Y$ be the evaluation fibration given by $\ev(g) = g(x_0)$.  Then  $\omega_Y\colon \aut Y \to Y$ is a retract (as a map) of $(\omega_Y)_*\colon \map(X, \aut Y) \to \map(X,Y)$, which in turn is a retract of $\omega_{\map(X,Y)}\colon \aut \map(X,Y) \to \map(X,Y)$.   
\end{lemma}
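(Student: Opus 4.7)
The plan is to exhibit explicit sections and retractions in both statements, built from constant-map and evaluation constructions standard in the function-space setting.

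For the first claim, the natural candidates suffice: let $s \colon \aut Y \to \map(X,\aut Y)$ send $f$ to the constant map with value $f$, and let $r \colon \map(X,\aut Y) \to \aut Y$ be evaluation at $x_0$, using the analogous formulas for $Y$ and $\map(X,Y)$. Then $r \circ s$ is the identity in each case, and both resulting squares commute on the nose: $(\omega_Y)_*(c_f) = c_{\omega_Y(f)}$ on one side, and $(\omega_Y \circ g)(x_0) = \omega_Y(g(x_0))$ on the other.

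For the second claim, the inclusion $i \colon \map(X,\aut Y) \to \aut \map(X,Y)$ is the pointwise action defined by $i(g)(h)(x) = g(x)(h(x))$, while the retraction $r \colon \aut \map(X,Y) \to \map(X,\aut Y)$ is built from constant maps: $r(F)(x)(y) = F(c_y)(x)$, where $c_y \colon X \to Y$ denotes the constant map at $y$ (which lies in $\map(X,Y;0)$ because $Y$ is connected). Continuity of both maps is guaranteed by the exponential law of \cite{Ste67} under the standing local compactness and Hausdorffness hypotheses. That $r \circ i$ equals the identity unwinds directly from $r(i(g))(x)(y) = i(g)(c_y)(x) = g(x)(c_y(x)) = g(x)(y)$, and commutativity of the two required squares follows from
\[
\omega_{\map(X,Y)}(i(g))(x) = i(g)(c_*)(x) = g(x)(*) = (\omega_Y)_*(g)(x)
\]
and the parallel computation $(\omega_Y)_*(r(F))(x) = r(F)(x)(*) = F(c_*)(x) = \omega_{\map(X,Y)}(F)(x)$, where $c_*$ denotes the constant map at the basepoint of $Y$.

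The main piece of bookkeeping I expect to dominate the argument is verifying that the constructed maps land in the prescribed null components: that $i(g)$ lies in $\aut \map(X,Y)$, i.e.\ is freely homotopic to the identity, when $g$ lies in the null component of $\map(X,\aut Y)$; and that $r(F)(x) \in \aut Y$ for each $x$ while $r(F)$ itself lies in the null component of $\map(X,\aut Y)$ when $F \in \aut \map(X,Y)$. In both cases this is handled by transporting a free homotopy witnessing source-component membership (namely $g \simeq c_{1_Y}$ and $F \simeq 1_{\map(X,Y)}$ respectively) through the defining formula for $i$ or $r$; the resulting free homotopy shows the image is homotopic to $1_{\map(X,Y)}$ or to $c_{1_Y}$ in the appropriate space.
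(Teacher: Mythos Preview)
Your proof is correct and follows essentially the same plan as the paper: exhibit explicit section/retraction pairs for both squares, using constant-map sections and evaluation retractions.  The first retraction is identical to the paper's.  For the second, your retraction $r$ agrees with the paper's, but your section $i(g)(h)(x)=g(x)\bigl(h(x)\bigr)$ differs from the paper's $\Phi(g)(f)(x)=g(x)\bigl(f(x_0)\bigr)$; the paper arrives at its $\Phi$ by pulling back along $T\circ(\ev_Y\times 1)$ after passing through the exponential-law identifications $\map(X,\aut Y;0)\equiv\map(X\times Y,Y;\pi_2)$ and $\aut\map(X,Y;0)\equiv\map(\map(X,Y;0)\times X,Y;\mathrm{EV})$.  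Both choices satisfy $r\circ(\text{section})=1$ and make the evaluation squares commute, so either works.  Your pointwise-action formula has the pleasant feature that $i(c_{1_Y})=1_{\map(X,Y)}$ on the nose, which makes the component check you flag at the end entirely transparent.
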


\begin{proof}
For consider the following commutative diagram:
$$\xymatrix{\aut Y \ar[d]_{\omega_Y} \ar[rr]_-{\sigma_{\aut Y}}  & & \map(X, \aut Y)
\ar[d]_{(\omega_Y)_*} 
\ar@/_1pc/[ll]_-{\ev_{\aut Y}}  \ar[r]_{\Phi} & \aut \map(X,Y) \ar@/_1pc/[l]_-{r}
\ar[d]^{\omega_{\map(X,Y)}}\\
Y \ar[rr]^-{\sigma_Y} &  & \map(X,Y) \ar@{=}[r] \ar@/^1pc/[ll]^-{\ev_Y}  & \map(X,Y) }$$
The vertical maps are evaluation (at the basepoint) maps, or the map induced by the evaluation map in the case of the middle one.   In the left-hand square, the retractions and their sections are the usual evaluation maps of the form 
$\ev_Y\colon \map(X,Y;0) \to Y$, and $\sigma_Y \colon Y \to \map(X,Y;0)$, defined by evaluation at the base point of $X$, and the null section, respectively.  The latter means $\sigma_Y(y) = C_y$, with $C_y(x) = y$, the null map at $y$.  Note that the null map $X \to  \aut Y$ maps each point of $X$ to the \emph{identity} of $Y$, which is  the base point in $\aut Y$.

The maps $\Phi$ and $r$ may be described as follows.    The exponential law  gives homeomorphisms
$$\map(X, \aut Y; 0) \equiv \map(X \times Y, Y; \pi_2)$$
and
$$\aut \map(X,Y;0) \equiv \map( \map(X,Y;0) \times X, Y; \mathrm{EV}).$$
Here, $\mathrm{EV}\colon \map(X,Y;0) \times X \to  Y$ denotes the ``big" evaluation map, given by $\mathrm{EV}(f,x) = f(x)$. 
For the spaces on the right-hand sides, we have an obvious section and retraction  
$$\xymatrix{\map(X \times Y, Y; \pi_2) \ar[rrr]_-{\big(T \circ (\ev_Y \times 1)\big)^*}  & & & \map( \map(X,Y;0) \times X, Y; \mathrm{EV})  \ar@/_1pc/[lll]_-{\big( (1 \times \sigma_Y) \circ  T\big)^*}  }$$
where $\xymatrix{X \times Y \ar@{<->}[r]^{T}_{\equiv} &Y \times X}$ denotes the switching map.  Via the preceding adjunctions, we may define $\Phi$ and $r$, then, as
$$\Phi(g)(f)(x) := g(x)\big( f(x_0)\big) \qquad \mathrm{and} \qquad r(\psi)(x)(b) := \psi(C_b)(x),$$
for $g \in \map(X, \aut Y)$, $f \in \map(X,Y)$, $x \in X$, and $\psi \in \aut \map(X,Y)$, $x \in X$, $b\in Y$.

A straightforward check shows that both squares commute in either direction, and that we have the desired retractions. 
\end{proof}

\begin{remark}
Collapsing the two retractions from \lemref{lem: retractions} yields a retraction (of maps)
$$\xymatrix{\aut Y \ar[d]_{\omega_Y} \ar[r]^-{\iota}  & \aut \map(X,Y) \ar[d]^{\omega_{\map(X,Y)}}  
 \ar[rr]^-{(\ev_Y)_*\circ (\sigma_Y)^*} & &\aut Y 
\ar[d]^{\omega_Y} \\
Y \ar[r]_-{\sigma_Y} &  \map(X,Y) \ar[rr]_-{\ev_Y} & & Y }$$
In place of the sectioned fibration $\ev_Y\colon \map(X,Y) \to Y$, we could consider a general sectioned fibration $p\colon E \to B$, with section $\sigma\colon B \to E$ (so $E$ dominates $B$).  In this case,  we do have a diagram
$$\xymatrix{\aut B \ar[d]_{\omega_B} \ar@{..>}[r] & \aut E \ar[d]^{\omega_{E}}  
 \ar[rr]^-{p_*\,\circ\, \sigma^*} & &\aut B 
\ar[d]^{\omega_B} \\
B \ar[r]_-{\sigma} &  E \ar[rr]_-{p} & & B .}$$
 The right-hand square here yields the standard fact that, since $p$ has a right homotopy inverse, we have $p_\#\big( G_*(E)\big) \subseteq G_*(B)$.  On  the other hand, the lack of a filler $\aut B \to \aut E$ in the left-hand square, in general, reflects the fact that $G_n(-)$ fails to be a functor.   This makes  clear that we are relying on the particular properties of the evaluation fibration $\ev_Y\colon \map(X,Y) \to Y$ in order to obtain our results.
\end{remark}

We now give our main result.

\begin{theorem}\label{thm: split short Gottlieb}
For $X$, $Y$, and $A$ satisfying the hypotheses above, there is a split short exact sequence of abelian groups
$$\xymatrix{ 0 \ar[r] & \mathcal{G}(\Sigma(A\wedge X), Y) \ar[r] &
\mathcal{G}\big(\Sigma A, \map(X,Y;0) \big) \ar[r]_-{(\ev_Y)_*} & \mathcal{G}(\Sigma A, Y) \ar@/_1pc/[l]_-{(\sigma_{Y})_*} \ar[r] & 0.}$$
\end{theorem}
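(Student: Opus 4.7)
The plan is to compare two evaluation fibrations and then pass to images. Start with the sectioned fibration $\map_*(X, Y; 0) \to \map(X, Y; 0) \xrightarrow{\ev_Y} Y$, and in parallel the analogous fibration with $\aut Y$ in place of $Y$, with section $\sigma_{\aut Y}$. Post-composition with $\omega_Y \colon \aut Y \to Y$ induces a map between them. Applying $[\Sigma A, -]$ and the adjunction $[\Sigma A, \map_*(X, Z; 0)] \cong [\Sigma(A \wedge X), Z]$, each fibration yields a split short exact sequence of abelian groups, and together we obtain a commutative diagram
\begin{equation*}
\xymatrix@C=0.9em{
0 \ar[r] & [\Sigma(A\wedge X), \aut Y] \ar[d]_-{(\omega_Y)_{*}} \ar[r] & [\Sigma A, \map(X,\aut Y;0)] \ar[d]_-{(\omega_Y)_{*}} \ar[r] & [\Sigma A, \aut Y] \ar[d]^-{(\omega_Y)_{*}} \ar[r] & 0 \\
0 \ar[r] & [\Sigma(A\wedge X), Y] \ar[r] & [\Sigma A, \map(X,Y;0)] \ar[r]_-{(\ev_Y)_{*}} & [\Sigma A, Y] \ar[r] & 0.
}
\end{equation*}
The images of the outer columns are $\mathcal{G}(\Sigma(A\wedge X), Y)$ and $\mathcal{G}(\Sigma A, Y)$ by definition. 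The identities $\omega_{\map(X,Y)} \circ \Phi = (\omega_Y)_*$ and $(\omega_Y)_* \circ r = \omega_{\map(X,Y)}$ coming from \lemref{lem: retractions} show that the image of the middle column equals $\mathcal{G}(\Sigma A, \map(X,Y;0))$ as well.

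Next I would verify that the horizontal maps restrict to these images. The left-hand square of \lemref{lem: retractions}, namely $\sigma_Y \circ \omega_Y = (\omega_Y)_* \circ \sigma_{\aut Y}$, shows that $(\sigma_Y)_*$ carries $\mathcal{G}(\Sigma A, Y)$ into $\mathcal{G}(\Sigma A, \map(X,Y;0))$. Since $(\ev_Y)_* \circ (\sigma_Y)_* = \mathrm{id}$, the restricted sequence of images is split; in particular $(\ev_Y)_*$ is surjective onto $\mathcal{G}(\Sigma A, Y)$, and injectivity on the left is inherited from the ambient split short exact sequence.

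The essential step is exactness at the middle. Suppose $\gamma \in \mathcal{G}(\Sigma A, \map(X,Y;0))$ with $(\ev_Y)_*(\gamma) = 0$; by the identification above, $\gamma = (\omega_Y)_*(g)$ for some $g \colon \Sigma A \to \map(X, \aut Y; 0)$. The H-space structure on $\aut Y$ makes $[\Sigma A, \map(X, \aut Y; 0)]$ a group in which the top row of the diagram splits, so we may write $g = g' \cdot \sigma_{\aut Y \, *}(\ev_{\aut Y \, *}(g))$ with $g'$ lying in the fiber term $[\Sigma A, \map_*(X, \aut Y; 0)]$. Applying the homomorphism $(\omega_Y)_*$ and using $(\omega_Y)_* \circ \sigma_{\aut Y} = \sigma_Y \circ \omega_Y$, the section factor collapses to $\sigma_{Y*}(\ev_{Y*}(\gamma)) = 0$, so $\gamma = (\omega_Y)_*(g')$, which under the adjunction is the image of $\omega_Y \circ (g')^\sharp \in \mathcal{G}(\Sigma(A\wedge X), Y)$.

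I expect this middle exactness to be the main obstacle: generalized Gottlieb groups are defined as images of maps between hom-sets and do not, on their own, fit into a long exact sequence. \lemref{lem: retractions} is the tool that unblocks the argument, because it lets us replace the opaque $\aut \map(X,Y;0)$ by $\map(X, \aut Y; 0)$ at the level of images, so the adjunction becomes applicable and the H-space structure on $\aut Y$ can be exploited to peel off the section factor.
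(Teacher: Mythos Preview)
Your proof is correct and follows essentially the same route as the paper's: build the ladder of split short exact sequences from the evaluation fibrations of $\map(X,-)$, use \lemref{lem: retractions} (both identities $\omega_{\map(X,Y)}\circ\Phi=(\omega_Y)_*$ and $(\omega_Y)_*\circ r=\omega_{\map(X,Y)}$) to identify the middle image with $\mathcal{G}(\Sigma A,\map(X,Y;0))$, and then pass to images. The paper compresses your exactness-at-the-middle argument into the phrase ``a standard diagram chase,'' but the content is the same.
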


\begin{proof}
Consider the following commutative diagram.
\begin{equation}\label{eq: evaluation retract}
\xymatrix{
 & & \aut \map(X,Y) \ar@<1ex>[d]^{r} \\
 \map_*(X, \aut Y) \ar[d]_{(\omega_Y)_{*}} \ar[rr]_-{j_{\aut Y}}  & & \map(X, \aut Y) \ar[r]_-{\ev_{\aut Y}} 
\ar[d]_{(\omega_Y)_{*}}  \ar@<1ex>[u]^{\Phi}
   & \aut Y  \ar[d]^{\omega_Y} \ar@/_1pc/[l]_-{\sigma_{\aut Y}} \\
\map_*(X,Y) \ar[rr]_-{j_Y}  &  & \map(X,Y)  \ar[r]^-{\ev_Y}    & Y \ar@/^1pc/[l]^-{\sigma_Y}  } 
\end{equation}
The horizontal rows are fibre sequences of evaluation fibrations (restricted to the null component), each with a section.  The vertical maps are the evaluation map and its induced maps.  The maps $\Phi$ and $r$ are those from \lemref{lem: retractions}.  

Now apply $[\Sigma A, -]$ to this diagram.  Then the lower part of the diagram yields a ladder of split extensions   
$$\xymatrix{
 [\Sigma(A \wedge X), \aut Y] \ \ \ \ar[d]_{(\omega_Y)_{*}} \ar@{>>->}[r]_-{j}  &  [\Sigma A,\map(X, \aut Y)] \ar@{->>}[r]_-{\ev_{*}} 
\ar[d]_{( \omega_{\map(X,Y)} \circ \Phi)_{*}}  
   &  [\Sigma A, \aut Y]  \ar[d]^{(\omega_Y)_*} \ar@/_1pc/[l]_-{\sigma_{*}} \\
[\Sigma(A \wedge X),Y] \ \ \ \ar@{>>->}[r]_-{j}    & [\Sigma A, \map(X,Y)]  \ar@{->>}[r]^-{(\ev)_*}    & [\Sigma A, Y] \ar@/^1pc/[l]^-{(\sigma)_*}  } 
$$
All maps are those induced from (\ref{eq: evaluation retract}), and we have omitted subscripts off the horizontal ones.  We have used the exponential law for based mapping spaces to write the left-hand terms in this way: the identification
$$[\Sigma A, \map_*(X,Z)] = \pi_0\big( \map_*(\Sigma A, \map_*(X,Z))\big) = 
\pi_0\big( \map_*(\Sigma A \wedge X,Z)\big) = [\Sigma A \wedge X,Z]$$
is natural with respect to maps induced by maps of $Z$.  Now  consider the image of the upper split extension in the lower. Generally, the image of an exact sequence in an exact sequence fails to be exact.  However, here we have the image of one split, short exact sequence in another.  One easily checks---a standard diagram chase---that the sequence of image subgroups is also split short exact.  Furthermore, the image subgroups are abelian, since $\aut Y$ is an H-space, and hence the groups in the upper sequence are abelian.  The first and last image subgroups are  $\mathcal{G}(\Sigma(A\wedge X), Y)$ and $\mathcal{G}(\Sigma A, Y)$ by definition.  The middle image subgroup is  $\mathcal{G}\big(\Sigma A, \map(X,Y;0) \big)$ because of the factorization $(\omega_Y)_* =  ( \omega_{\map(X,Y)} \circ \Phi)_{*}\colon \map(X, \aut Y) \to \map(X,Y)$ as in (\ref{eq: evaluation retract}), together with the fact that the sequences are split short exact.
\end{proof}

\begin{corollary}\label{cor: Gottlieb function space}
We have an isomorphism of abelian groups
$$\mathcal{G}\big(\Sigma A , \map(X, Y;0)\big) \cong \mathcal{G}(\Sigma A, Y) \oplus \mathcal{G}(\Sigma(A \wedge X), Y).$$
In particular, for each $n \geq 1$, we have an isomorphism of abelian groups
$$G_n\big(\map(X, Y;0)\big) \cong G_n(Y) \oplus \mathcal{G}(\Sigma^{n} X, Y). \qed$$
\end{corollary}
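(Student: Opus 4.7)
The plan is to deduce the corollary directly from \thmref{thm: split short Gottlieb}, which was just proved. Since that theorem furnishes a split short exact sequence of abelian groups
$$0 \to \mathcal{G}(\Sigma(A\wedge X), Y) \to \mathcal{G}\big(\Sigma A, \map(X,Y;0)\big) \to \mathcal{G}(\Sigma A, Y) \to 0,$$
the standard splitting lemma for abelian groups yields an internal direct sum decomposition of the middle term as the sum of its kernel and the image of the splitting $(\sigma_Y)_*$. This gives the first displayed isomorphism. There is no real content to add here beyond invoking the splitting.

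For the second displayed isomorphism, I would specialize the first one by taking $A = S^{n-1}$ for $n \geq 1$. Then $\Sigma A = \Sigma S^{n-1} = S^n$, and for any space $Z$ we have $\mathcal{G}(S^n, Z) = G_n(Z)$ essentially by definition (as already noted just before the statement of \thmref{thm: intro Gottlieb function space}). Similarly, $\Sigma(A \wedge X) = \Sigma(S^{n-1} \wedge X) \simeq S^{n-1} \wedge \Sigma X \simeq \Sigma^n X$, using the standard identification of the suspension with the smash product with $S^1$ and associativity/commutativity of $\wedge$ up to based homotopy equivalence. Substituting these identifications into the first isomorphism produces
$$G_n\big(\map(X, Y;0)\big) \cong G_n(Y) \oplus \mathcal{G}(\Sigma^n X, Y),$$
as required.

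There is no real obstacle in this proof; the work has all been done in \thmref{thm: split short Gottlieb}, and the corollary is simply a matter of unwinding notation. The only point that deserves a brief mention (to keep the argument honest) is that all the groups involved are abelian, so the splitting lemma applies and produces a genuine direct sum; this was built into the statement of \thmref{thm: split short Gottlieb} via the H-space structure on $\aut Y$.
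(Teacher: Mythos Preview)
Your proposal is correct and matches the paper's approach: the corollary is marked with a \qed\ symbol and no explicit proof, indicating it is immediate from \thmref{thm: split short Gottlieb}. Your unwinding of the splitting lemma and the specialization $A = S^{n-1}$ (with the identification $\Sigma(S^{n-1}\wedge X) \simeq \Sigma^n X$) is exactly the intended reading.
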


Now specialize to the case in which $X = S^{i_1} \vee \cdots \vee S^{i_k}$ or, more generally, in which we have $\Sigma X = S^{i_1+1} \vee \cdots \vee S^{i_k+1}$.

\begin{theorem}\label{thm: General Gottlieb group}
If $\Sigma X = S^{i_1+1} \vee \cdots \vee S^{i_k+1}$, then we have an isomorphism
$$G_n\big(\map(X,Y) \big) \cong G_n(Y) \oplus \bigoplus_{r = 1, \dots, k} G_{n+i_r}(Y),$$
for $n \geq 1$.   More generally, given a product decomposition $X = X' \times X''$, for which we have $\Sigma X' = S^{i_1+1} \vee \cdots \vee S^{i_k+1}$, then for $n \geq 1$ we have an isomorphism
$$G_n\big(\map(X,Y) \big) \cong G_n\big(\map(X'',Y) \big)\oplus \bigoplus_{r = 1, \dots, k} G_{n+i_r}\big(\map(X'',Y) \big).$$
\end{theorem}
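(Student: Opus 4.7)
The plan is to derive Theorem~\ref{thm: General Gottlieb group} from Corollary~\ref{cor: Gottlieb function space} essentially by computing the generalized Gottlieb group $\mathcal{G}(\Sigma^n X, Y)$ when $\Sigma X$ is a wedge of spheres, and then bootstrapping via the exponential law to handle the product case.

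First I would settle the following wedge-of-spheres identity: if $W = S^{j_1} \vee \cdots \vee S^{j_k}$ with every $j_r \geq 1$, then $\mathcal{G}(W, Z) \cong \bigoplus_{r} G_{j_r}(Z)$ for any $Z$. The natural bijection $[W, Z] \cong \prod_r \pi_{j_r}(Z)$ sends a homotopy class to the tuple of its restrictions to the wedge summands, and is induced by the inclusions $S^{j_r} \hookrightarrow W$. Naturality of this bijection in $Z$ implies that the evaluation homomorphism $\omega_*\colon [W, \aut Z] \to [W, Z]$ is identified with the product $\prod_r \omega_\#\colon \prod_r \pi_{j_r}(\aut Z) \to \prod_r \pi_{j_r}(Z)$. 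The image on each factor is $G_{j_r}(Z)$ by definition, giving $\mathcal{G}(W,Z) \cong \bigoplus_r G_{j_r}(Z)$. (Since we already know from Theorem~\ref{thm: split short Gottlieb} that $\mathcal{G}(W,Z)$ is abelian, there is no subtlety about H-space structure needed to make this a group isomorphism.)

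For the first statement, I would then apply Corollary~\ref{cor: Gottlieb function space} to obtain
$$G_n\bigl(\map(X,Y;0)\bigr) \cong G_n(Y) \oplus \mathcal{G}(\Sigma^n X, Y),$$
and observe that $\Sigma^n X = \Sigma^{n-1}(\Sigma X) = S^{n+i_1} \vee \cdots \vee S^{n+i_k}$. Since $n \geq 1$ ensures every exponent is at least $1$, the wedge-of-spheres identity above yields $\mathcal{G}(\Sigma^n X, Y) \cong \bigoplus_r G_{n+i_r}(Y)$, which is the claim.

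For the relative version, the plan is to use the exponential law $\map(X' \times X'', Y) \equiv \map(X', \map(X'', Y))$, noting that under this homeomorphism the null component on the left corresponds to the null component of $\map(X', \map(X'',Y;0);0)$ (since a map into the null component based at the constant map is itself null-homotopic in the full function space). Applying Corollary~\ref{cor: Gottlieb function space} with $X'$ in place of $X$ and $\map(X'',Y;0)$ in place of $Y$ gives
$$G_n\bigl(\map(X,Y;0)\bigr) \cong G_n\bigl(\map(X'',Y;0)\bigr) \oplus \mathcal{G}\bigl(\Sigma^n X', \map(X'',Y;0)\bigr),$$
and the wedge-of-spheres identity applied to $Z = \map(X'',Y;0)$ and $W = \Sigma^n X'$ converts the second summand into $\bigoplus_r G_{n+i_r}(\map(X'',Y;0))$.

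The only real point to get right is the wedge-of-spheres computation of $\mathcal{G}(\Sigma A, Y)$, and in particular the fact that the natural coordinate decomposition $[\bigvee S^{j_r}, Z] \cong \prod \pi_{j_r}(Z)$ is compatible with the homomorphism induced by $\omega\colon \aut Y \to Y$. This is immediate from functoriality, so I do not anticipate a serious obstacle; the main thing is to organize the bookkeeping so that the exponential law, the Gottlieb decomposition of Corollary~\ref{cor: Gottlieb function space}, and the wedge computation are applied in the correct order.
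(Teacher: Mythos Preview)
Your proposal is correct and follows essentially the same route as the paper: apply \corref{cor: Gottlieb function space}, decompose $\mathcal{G}(\Sigma^n X, Y)$ via the natural identification $[\bigvee S^{j_r}, Z]\cong \prod \pi_{j_r}(Z)$ (which the paper states more tersely), and handle the product case by the exponential law $\map(X,Y)=\map(X',\map(X'',Y))$ followed by the first statement. Your explicit justification of the wedge-of-spheres identity via naturality in $Z$ is a welcome elaboration of a step the paper leaves implicit.
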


\begin{proof}
In the first case, we have $\Sigma^n X \simeq S^{n+i_1} \vee \cdots \vee S^{n+i_k}$, and hence $[\Sigma^n X, Z] \cong
\oplus_{r = 1, \dots, k} [S^{n+i_r}, Z]$ for any $Z$.  It follows that 
$\mathcal{G}(\Sigma^n X, Y) \cong \oplus_{r = 1, \dots, k} G_{n+i_r}(Y)$, and the first assertion follows immediately from \corref{cor: Gottlieb function space}.
For the more general statement, simply write $\map(X,Y)$ as $\map(X', \map(X'', Y))$ and apply the first statement.  This contains the first statement as the case in which $X'' = *$.
\end{proof}

\section{Explicit Decompositions: Gottlieb Groups of Iterated Free Bouquet Spaces}\label{sec:Free Bouquets}

In this section, we consider the particular cases in which $X$ is a product, each factor of which splits as a wedge of spheres after one suspension.  Here,  the first isomorphism of \thmref{thm: General Gottlieb group} may be applied recursively, so as to express the Gottlieb groups of $\map(X,Y)$ directly in terms of the Gottlieb groups of $Y$.  This yields the formulas of \thmref{introthm: Gottlieb free loop} and \thmref{thm: intro Gottlieb iterated bouquet} from \secref{sec:intro}, which are included in the next result, as well as their more general versions.  

For $m \geq 1$, denote by $\Lambda _m Y$ the \emph{free $m$-bouquet space of $Y$}, i.e., the space $\map(S_m,Y)$, where $S_m = S^1 \vee \cdots \vee S^1$ is a bouquet of $m$ circles. 
Thus $\Lambda_1Y = \Lambda Y$, and the free $2$-bouquet space of $Y$, $\Lambda_2Y$,  might  also be called the space of ``free figure-eights" in $Y$.
 Iterating this construction, whereby we set $\Lambda _m^1 Y =  \Lambda _m Y$, and $\Lambda _m^N Y = \Lambda _m(\Lambda _m^{N-1} Y)$ for $N \geq 2$, we obtain  $\Lambda _m^N Y$, the \emph{$N$-fold iterated free $m$-bouquet space of $Y$}.  In the special case in which $m = 1$, we have $\Lambda_1^N Y = \Lambda ^N Y$, the $N$-fold iterated free loop space of $Y$.

\begin{corollary}[to \thmref{thm: General Gottlieb group}]%
\label{cor: Gottlieb iterated bouquet}
For $\Lambda _m^N Y$, the $N$-fold iterated free $m$-bouquet space of $Y$, we have
$$G_n(\Lambda _m^N Y) \cong \bigoplus_{j = 0}^{N} m^j {N\choose j} G_{n+j}(Y),$$
for $n \geq 1$.  If $m=1$, then we obtain \thmref{thm: intro Gottlieb iterated bouquet} of the Introduction.
\end{corollary}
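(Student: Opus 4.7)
The plan is to prove the formula by induction on $N$, using \thmref{thm: General Gottlieb group} as the engine and a standard Pascal-type identity to handle the bookkeeping.

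First, I would dispose of the base case $N=1$. Here $\Lambda_m Y = \map(S_m, Y)$ with $S_m = S^1 \vee \cdots \vee S^1$, so $\Sigma S_m \simeq S^2 \vee \cdots \vee S^2$ is a wedge of $m$ copies of $S^2$. Applying \thmref{thm: General Gottlieb group} with all $i_r = 1$ yields
$$G_n(\Lambda_m Y) \cong G_n(Y) \oplus m\, G_{n+1}(Y),$$
which is exactly the claimed formula for $N=1$, since $\binom{1}{0} = 1$ and $m\binom{1}{1} = m$.

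Next I would do the inductive step. Assume the formula holds for $N-1$. By definition, $\Lambda_m^N Y = \Lambda_m(\Lambda_m^{N-1} Y)$, so applying the base case (with $Y$ replaced by $\Lambda_m^{N-1} Y$) gives
$$G_n(\Lambda_m^N Y) \cong G_n(\Lambda_m^{N-1} Y) \oplus m\, G_{n+1}(\Lambda_m^{N-1} Y).$$
Now substitute the inductive hypothesis into both summands:
$$G_n(\Lambda_m^N Y) \cong \bigoplus_{j=0}^{N-1} m^j \binom{N-1}{j} G_{n+j}(Y) \; \oplus \; \bigoplus_{j=0}^{N-1} m^{j+1} \binom{N-1}{j} G_{n+1+j}(Y).$$
Re-indexing the second sum by $j \mapsto j-1$ converts it to $\bigoplus_{j=1}^{N} m^j \binom{N-1}{j-1} G_{n+j}(Y)$. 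Collecting coefficients of $G_{n+j}(Y)$ across both sums and invoking Pascal's identity $\binom{N-1}{j} + \binom{N-1}{j-1} = \binom{N}{j}$ (with the obvious conventions at the endpoints $j=0$ and $j=N$) produces $m^j \binom{N}{j}$, as required.

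There is essentially no obstacle here: the only non-trivial content is in the repeated application of \thmref{thm: General Gottlieb group}, which is already available, and the combinatorial identity is classical. The mildest point to be careful about is ensuring the iteration is well-defined, i.e., that $\Lambda_m^{N-1} Y$ satisfies the standing hypotheses of \secref{sec: Gottlieb function space} so that \thmref{thm: General Gottlieb group} may be re-applied; but this is automatic, since the function space construction preserves the local compactness, Hausdorff and well-pointedness assumptions needed. The specialization to $m=1$ in \thmref{thm: intro Gottlieb iterated bouquet} then follows by inspection.
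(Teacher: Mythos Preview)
Your argument is correct and matches the paper's own proof almost line for line: induction on $N$, base case from \thmref{thm: General Gottlieb group}, then the same theorem applied to $\Lambda_m^{N-1}Y$, with Pascal's identity handling the coefficients. The only quibble is your aside that the function space construction preserves local compactness---this is false in general, but the paper's standing setup (compactly generated spaces via \cite{Ste67}, and the iterated form already built into the second part of \thmref{thm: General Gottlieb group}) accommodates the iteration, so the argument is unaffected.
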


\begin{proof}
We work inductively over $N$.  For $N= 1$, \thmref{thm: General Gottlieb group} gives
$$G_n(\Lambda_m Y) = G_n\big(\map(S_m,Y) \big) \cong G_n(Y) \oplus m \,G_{n+1}(Y).$$
Now suppose the decomposition holds for $N \leq k-1$, some $k \geq 2$.  Then 
$$\Lambda_m^k Y = \Lambda_m( \Lambda_m^{k-1} Y) = \map(S_m, \Lambda_m^{k-1} Y),$$
and so \thmref{thm: General Gottlieb group} gives
$$G_n(\Lambda_m^k Y) \cong G_n(\Lambda_m^{k-1} Y)  \oplus m \,G_{n+1}(\Lambda_m^{k-1} Y).$$
By induction, we have that
$$
\begin{aligned}
G_n(\Lambda_m^k Y) & \cong \bigoplus_{j = 0}^{k-1} m^j {k-1\choose j} G_{n+j}(Y)  \oplus m\,\bigoplus_{j = 0}^{k-1} m^j {k-1\choose j} G_{n+j+1}(Y)\\
& = G_n(Y) \oplus \bigoplus_{j = 1}^{k-1} m^j {k-1\choose j} G_{n+j}(Y) \\
&  \ \ \ \ \ \ \ \ \ \ \ \ \ \ \ \ \ \ \oplus  \bigoplus_{j = 0}^{k-2} m^{j+1} {k-1\choose j} G_{n+j+1}(Y) \oplus m^k\, G_{n+k}(Y)
\end{aligned}
$$
$$= G_n(Y) \oplus \left(\bigoplus_{j = 1}^{k-1} m^j \left( {k-1\choose j} + {k-1\choose j-1} \right)G_{n+j}(Y) \right) \oplus m^k\, G_{n+k}(Y).$$
Now 
$${k-1\choose j} + {k-1\choose j-1} = {k \choose j},$$
and the result follows.
\end{proof}

We may replace the bouquet of circles by a more general bouquet of spheres (of different dimensions), and obtain explicit decompositions of the Gottlieb groups of the corresponding function spaces directly in terms of the Gottlieb groups of $Y$, although the expressions become more cumbersome to present.   We offer one example to illustrate the general idea, and then give a summary result.  Denote by $\Lambda _{(2,2, 2)}Y$ the space $\map(S^2 \vee S^2 \vee S^2, Y)$, and by $\Lambda _{(2,2,2)}^N Y$ the iterated version of the same, whereby we set $\Lambda _{(2,2,2)}^1 Y =  \Lambda _{(2,2,2)} Y$, and $\Lambda _{(2,2,2)}^N Y = \Lambda _{(2,2,2)}(\Lambda _{(2,2,2)}^{N-1} Y)$ for $N \geq 2$.

\begin{example}
With the above notation, for $n \geq 1$ we have
$$G_n(\Lambda _{(2,2,2)}^N Y) \cong \bigoplus_{j = 0}^{N} 3^j {N\choose j} G_{n+2j}(Y).$$
\end{example}

We summarize these explicit decompositions in the following result.  Suppose $X$ is an $N$-fold product $X = X_1 \times \cdots \times X_N$, each factor of which suspends to some bouquet of spheres.  For each $i = 1, \dots, N$, we write
$$\Sigma X_i =  S^{(i, 1)+1} \vee \cdots \vee S^{(i, n_i)+1}.$$
Here, a superscript $(i, j)+1$ denotes the dimension of the $j$th sphere in $\Sigma X_i$. 
For example, we could just take $X_i =  S^{(i, 1)} \vee \cdots \vee S^{(i, n_i)}$, a plain bouquet  of spheres.

\begin{theorem}\label{thm: general decomposition}
With $X$ as above, we have isomorphisms
$$G_n\big( \map(X,Y;0) \big) \cong G_n(Y) \oplus \bigoplus_{r=1, \dots, N} G_{ n + (i_1, m_1) + \cdots + (i_r,m_r)} (Y),$$
where the sum, for each $r$, runs over all possible $r$-tuples  $1 \leq i_1 < i_2 < \cdots < i_r \leq N$ and all $m_j$ with $1 \leq m_j \leq n_{i_j}$.
\end{theorem}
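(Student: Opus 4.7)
The plan is to proceed by induction on $N$, applying the second (more general) form of \thmref{thm: General Gottlieb group} together with the exponential law, which identifies $\map(X_1 \times X'', Y)$ with $\map(X_1, \map(X'', Y))$ for any $X''$. The base case $N=1$ is immediate: with $X = X_1$ satisfying $\Sigma X_1 = S^{(1,1)+1} \vee \cdots \vee S^{(1,n_1)+1}$, the first part of \thmref{thm: General Gottlieb group} gives
$$G_n\big(\map(X_1,Y)\big) \cong G_n(Y) \oplus \bigoplus_{m_1=1}^{n_1} G_{n+(1,m_1)}(Y),$$
which is the asserted formula, since the only nonempty tuples when $N=1$ are the singletons $(i_1) = (1)$ with $m_1 \in \{1,\dots,n_1\}$.

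For the inductive step, assume the formula holds for products of $N-1$ factors, and write $X = X_1 \times X''$ with $X'' = X_2 \times \cdots \times X_N$. Since $\Sigma X_1$ is a wedge of spheres of the required form, the second statement of \thmref{thm: General Gottlieb group}, applied with $X' = X_1$, yields
$$G_n\big(\map(X,Y)\big) \cong G_n\big(\map(X'',Y)\big) \oplus \bigoplus_{m_1=1}^{n_1} G_{n+(1,m_1)}\big(\map(X'',Y)\big).$$
The inductive hypothesis now expands each summand on the right directly in terms of $G_\ast(Y)$: the first summand contributes exactly those terms of the asserted decomposition whose index tuple $(i_1,\dots,i_r)$ is drawn from $\{2,\dots,N\}$ (i.e., does not involve the index $1$), while each summand $G_{n+(1,m_1)}(\map(X'',Y))$ contributes those terms in which $i_1 = 1$ with the specified $m_1$, concatenated with an arbitrary tuple of indices from $\{2,\dots,N\}$.

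The only substance is the combinatorial bookkeeping at the end. This becomes transparent if one views the right-hand side of the theorem as a direct sum indexed by (possibly empty) subsets $S = \{i_1 < \cdots < i_r\} \subseteq \{1,\dots,N\}$ together with a choice of $m_j \in \{1,\dots,n_{i_j}\}$ for each $j$; then the induction bifurcates these subsets according to whether $1 \in S$ or $1 \notin S$, and each of the two resulting families of summands is supplied by exactly one half of the decomposition above. I do not anticipate any real obstacle: the proof is an inductive unwinding, and no topological input beyond \thmref{thm: General Gottlieb group} and the exponential law is needed.
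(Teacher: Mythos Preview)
Your proposal is correct and follows essentially the same route as the paper: induction on $N$, peeling off the first factor via the exponential law, applying \thmref{thm: General Gottlieb group} to $\map(X_1,\map(X'',Y))$, and then invoking the induction hypothesis on each summand. Your explicit description of the bookkeeping in terms of subsets $S\subseteq\{1,\dots,N\}$ bifurcated by whether $1\in S$ is slightly more detailed than the paper's, but the argument is the same.
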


\begin{proof}
We argue by induction over $N$, the number of factors in $X$.  For $N=1$, we have
$$G_n\big( \map(X,Y;0) \big) \cong G_n(Y) \oplus \bigoplus_{1\leq m \leq n_1} G_{n+(1,m)}(Y),$$
from \thmref{thm: General Gottlieb group}.

Now assume the statement for a product of $N-1$ factors, and write $X = X_1 \times X_{(2, \dots, N)}$, where $X_{(2, \dots, N)} = X_2 \times \cdots \times X_N$.  Then we have
$$\map(X,Y;0) = \map\big( X_1, \map( X_{(2, \dots, N)}, Y) \big),$$
and hence 
$$G_n\big( \map(X,Y;0) \big) \cong G_n\big(\map( X_{(2, \dots, N)}, Y)\big) \oplus \bigoplus_{1\leq m \leq n_1} G_{n+(1,m)}\big(\map( X_{(2, \dots, N)}, Y)\big),$$
again from \thmref{thm: General Gottlieb group}.
Applying the induction hypothesis to each of these terms completes the induction step, and the result follows.
\end{proof}

The decomposition of \thmref{thm: general decomposition} leads to the following observation, which we develop at the end of the paper.

\begin{remark}
Let $X$ be a space that satisfies the hypotheses of \thmref{thm: general decomposition}.  Then, for each $n \geq 1$, $G_n(\map(X,Y;0))$ may be written as a direct sum of $G_n(Y)$ and Gottlieb groups of $Y$ of dimension greater than $n$. 
\end{remark}

In case $X$ is a suspension, or product of such, that does not necessarily split after a further suspension, we may also obtain fairly explicit decompositions for the  (generalized) Gottlieb groups of $\map(X, Y)$
in terms of  (generalized) Gottlieb groups of $Y$.  
From  \corref{cor: Gottlieb function space}, we obtain the following decomposition.  Recall, from our hypotheses established in \secref{sec: Gottlieb function space},  that the spaces that appear suspended in the next two results, namely $A$, $B$, or $B_i$, need not be connected.

\begin{corollary}\label{cor: generalized Gottlieb suspension} 
For  $n \geq 1$, we have isomorphisms of abelian groups 
$$G_n\big(
\map(\Sigma B, Y)\big) \cong G_n(Y) \oplus \mathcal{G}(\Sigma^{n+1} B, Y)$$
and
$$\mathcal{G}\big(\Sigma A , \map(\Sigma B, Y)\big) \cong \mathcal{G}(\Sigma A, Y) \oplus \mathcal{G}(\Sigma^2(A \wedge B), Y). \qed$$
\end{corollary}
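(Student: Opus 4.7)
The plan is to derive both isomorphisms as direct specializations of \corref{cor: Gottlieb function space}, applied with $X = \Sigma B$. Substituting $\Sigma B$ for $X$ in the generalized form yields
$$\mathcal{G}\bigl(\Sigma A, \map(\Sigma B, Y;0)\bigr) \cong \mathcal{G}(\Sigma A, Y) \oplus \mathcal{G}\bigl(\Sigma(A \wedge \Sigma B), Y\bigr),$$
and substituting in the particular form (the $n$th-Gottlieb-group version) yields
$$G_n\bigl(\map(\Sigma B, Y;0)\bigr) \cong G_n(Y) \oplus \mathcal{G}(\Sigma^n \Sigma B, Y).$$
So the entire task reduces to simplifying the right-hand summands.

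For the first stated isomorphism, I would simply note the identification $\Sigma^n \Sigma B = \Sigma^{n+1} B$, which is immediate from the definition of iterated reduced suspension. For the second, I would use the standard natural homeomorphism $A \wedge \Sigma B \cong \Sigma(A \wedge B)$ (together with $S^1 \wedge (A \wedge B) \cong (S^1 \wedge A) \wedge B$ if one prefers to route through that formula), which then gives $\Sigma(A \wedge \Sigma B) \cong \Sigma^2(A \wedge B)$. Substituting this into the expression above produces the claimed decomposition.

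There is no real obstacle here; the result is essentially a restatement of \corref{cor: Gottlieb function space} in the special case that the source space is itself a suspension. The only point worth flagging is that the hypotheses specified at the start of \secref{sec: Gottlieb function space} must be preserved when we replace $X$ by $\Sigma B$: since $B$ is assumed well-pointed, locally compact, and Hausdorff (and is allowed to be disconnected because it appears only suspended), the space $\Sigma B$ again satisfies the running hypotheses, so \corref{cor: Gottlieb function space} applies legitimately. I would make this observation explicit and then state the two isomorphisms, with the reduction to \corref{cor: Gottlieb function space} as the one-line proof.
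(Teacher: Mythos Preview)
Your proposal is correct and matches the paper's approach exactly: the paper states this corollary with only a \qed, preceded by the remark that it follows directly from \corref{cor: Gottlieb function space}. Your substitution $X = \Sigma B$ together with the identifications $\Sigma^n\Sigma B = \Sigma^{n+1}B$ and $\Sigma(A\wedge\Sigma B)\cong\Sigma^2(A\wedge B)$ is precisely what the paper intends.
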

\noindent{}Then this result may be used iteratively, to obtain decompositions for $X$ a product of suspensions.  We state one result for the case in which $k = 2$, to give the general idea.  Here, the combinatorial aspects of  the decompositions are not so elegant as above.

\begin{corollary}
For  $X = \Sigma B_1 \times \Sigma B_2$ we have
$$\begin{aligned}
G_n\big( \map(X, Y)\big) \cong & \\
G_n(Y)\, \oplus \,& \mathcal{G}(\Sigma^{n+1} B_1, Y) \oplus \mathcal{G}(\Sigma^{n+1} B_2, Y)
\oplus \mathcal{G}(\Sigma^{n+2}( B_1 \wedge B_2), Y),\\
\end{aligned}$$
\end{corollary}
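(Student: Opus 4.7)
The plan is to iterate Corollary \ref{cor: generalized Gottlieb suspension}, using the exponential law to reduce a product of suspensions to a nested function space of suspensions. Specifically, using the exponential law (valid under our hypotheses, since $\Sigma B_1$ and $\Sigma B_2$ are well-pointed, locally compact Hausdorff), I would rewrite
$$\map(\Sigma B_1 \times \Sigma B_2, Y;0) = \map\bigl(\Sigma B_1, \map(\Sigma B_2, Y;0);0\bigr).$$
Note that, as the null component of a function space with codomain $Y$, the space $\map(\Sigma B_2, Y;0)$ inherits the topological hypotheses needed to apply the results of \secref{sec: Gottlieb function space} again.

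Next, I would apply the first isomorphism of \corref{cor: generalized Gottlieb suspension} with the inner function space in place of $Y$:
$$G_n\bigl(\map(\Sigma B_1, \map(\Sigma B_2, Y))\bigr) \cong G_n\bigl(\map(\Sigma B_2, Y)\bigr) \oplus \mathcal{G}\bigl(\Sigma^{n+1} B_1, \map(\Sigma B_2, Y)\bigr).$$
The first summand is handled by another direct application of the first isomorphism of \corref{cor: generalized Gottlieb suspension}, yielding $G_n(Y) \oplus \mathcal{G}(\Sigma^{n+1} B_2, Y)$.

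For the second summand, I would apply the second isomorphism of \corref{cor: generalized Gottlieb suspension} with $A = \Sigma^{n} B_1$ (so that $\Sigma A = \Sigma^{n+1} B_1$) and $B = B_2$, giving
$$\mathcal{G}\bigl(\Sigma^{n+1} B_1, \map(\Sigma B_2, Y)\bigr) \cong \mathcal{G}(\Sigma^{n+1} B_1, Y) \oplus \mathcal{G}\bigl(\Sigma^2(\Sigma^n B_1 \wedge B_2), Y\bigr),$$
and then simplify using the standard identification $\Sigma^n B_1 \wedge B_2 \simeq \Sigma^n(B_1 \wedge B_2)$ to rewrite the last term as $\mathcal{G}(\Sigma^{n+2}(B_1 \wedge B_2), Y)$. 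Assembling the four resulting pieces yields the claimed isomorphism.

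The bookkeeping with suspension indices is the only real subtlety; there is no conceptual obstacle, since \corref{cor: generalized Gottlieb suspension} is the key input and the hypotheses propagate automatically under the exponential law. One minor point worth checking is the naturality of the exponential law at the level of the evaluation fibrations, so that the identifications in \corref{cor: generalized Gottlieb suspension} indeed compose correctly through the nested mapping space, but this is implicit in the derivation of \corref{cor: Gottlieb function space} from \thmref{thm: split short Gottlieb}.
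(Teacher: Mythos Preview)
Your proposal is correct and follows exactly the approach the paper indicates: the corollary is stated without explicit proof, immediately after the sentence ``Then this result may be used iteratively, to obtain decompositions for $X$ a product of suspensions,'' and your argument supplies precisely those iterative details via \corref{cor: generalized Gottlieb suspension} and the exponential law. The bookkeeping with suspension indices and the identification $\Sigma^n B_1 \wedge B_2 \simeq \Sigma^n(B_1 \wedge B_2)$ is handled correctly.
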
  

\noindent{}Note that there are various ways of re-writing terms of the form $\Sigma^{n+2}(B_1 \wedge B_2)$, which could be helpful in further decomposing the last term in specific cases.

\subsection{Fox Torus Groups and Fox-Gottlieb Groups}  
 
We discuss a relation between our  results above, and results on the torus homotopy groups, originally studied by Fox \cite{Fox}.  These non-abelian groups are defined as 
$$\tau_n(Y) = \pi_1\big(  \map(T^{n-1}, Y; 0) \big),$$
for each $n \geq 2$.  Equivalently,  as in \cite{G-G-W}, they may be defined as
$$\tau_n(Y) = [\Sigma(T^{n-1} \sqcup \{*\}), Y],$$
the set of based homotopy equivalence classes of maps from the (reduced) suspension $\Sigma(T^{n-1} \sqcup \{*\})$, where $T^{n-1} \sqcup \{*\}$ denotes the $(n-1)$-torus with a disjoint base point.  For $n = 1$, then, we take $\tau_1(Y) = \pi_1(Y)$, the ordinary fundamental group of $Y$.

It seems that various relations amongst these groups, and relations between results about these groups and our results above,  may be elucidated by consideration of free loop fibrations.   The basic connection is that, for $n \geq 2$, we may rewrite $\map(T^{n-1}, Y; 0)$ as the iterated free loop space $\Lambda^{n-1}Y$, and thus we have $\tau_n(Y) = \pi_1(\Lambda^{n-1}Y)$.

First, from a free loop fibration $\Omega Z \to \Lambda Z \to Z$, which has a section, we obtain a split short exact sequence of homotopy groups
$$\xymatrix{0 \ar[r]& \pi_i(\Omega Z) \ar[r] & \pi_i(\Lambda Z) \ar[r] & 
\pi_i(Z) \ar[r] \ar@/_1pc/[l] & 1},$$
for each $i \geq 1$.  This leads to direct sums
$$\pi_i(\Lambda Z) \cong  \pi_{i+1}(Z) \oplus \pi_i(Z),$$
for the higher homotopy groups $i \geq 2$ of the free loop space $\Lambda Z$, and a semi-direct product
$$\pi_1(\Lambda Z) \cong \pi_2(Z)  \rtimes \pi_1(Z)$$
for the fundamental group of $\Lambda Z$.  Now by applying these expressions iteratively, beginning with the free loop fibration  
$$\Omega\Lambda^{N-1} Y \to \Lambda^{N} Y \to \Lambda^{N-1} Y,$$
we obtain a direct sum formula 
$$\pi_i(\Lambda^N Y) \cong  \bigoplus_{r = 0}^{N} {N\choose r} \pi_{i + r}(Y),$$
for the higher homotopy groups $i \geq 2$ of the iterated free loop space, in terms of the homotopy groups of $Y$.   For the fundamental group, we have a sequence of split extensions
\begin{equation}\label{eq: extension iterated free loop}
\xymatrix{0 \ar[r]& \pi_1(\Omega\Lambda^{N-1} Y) \ar[r] & \pi_1(\Lambda^{N} Y) \ar[r] & 
\pi_1(\Lambda^{N-1} Y) \ar[r] \ar@/_1pc/[l] & 1},
\end{equation}
for $N \geq 1$, with $\Lambda^0Y$ meaning $Y$. 

Now we observe the following identity.

\begin{lemma}
For any $Z$, we have $\Omega(\Lambda Z) = \Lambda (\Omega Z)$. 
\end{lemma}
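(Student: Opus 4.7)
The plan is to identify both $\Omega(\Lambda Z)$ and $\Lambda(\Omega Z)$ with the same subspace of $\map(S^1 \times S^1, Z)$ via the exponential law, and then to exhibit the identification by the swap of the two $S^1$-factors. This parallels the function-space manipulations in the proof of \lemref{lem: retractions}.

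First, I would apply the exponential law (Theorems~5.6 and~5.12 of \cite{Ste67}, applicable under our standing hypotheses from \secref{sec: Gottlieb function space}) to unravel $\Omega(\Lambda Z) = \map_*\big(S^1, \map(S^1, Z; 0)\big)$ as the subspace of $\map(S^1 \times S^1, Z)$ consisting of those $F$ with $F(*, s) = z_0$ for all $s$; the constraint that each slice $F(t, -)$ lie in the null component $\Lambda Z$ is automatic, since $F(*, -)$ is the constant loop at $z_0$ and continuity in $t$ propagates this property to the entire family. Symmetrically, I would identify $\Lambda(\Omega Z) = \map\big(S^1, \map_*(S^1, Z); 0\big)$ with the subspace of maps $G\colon S^1 \times S^1 \to Z$ satisfying $G(t, *) = z_0$ for all $t$, where the null-component restriction of the outer $\Lambda$ picks out the path-component of the constant map at $z_0$.

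Finally, the swap $\tau\colon S^1 \times S^1 \to S^1 \times S^1$, $\tau(t, s) = (s, t)$, induces a self-homeomorphism $\tau^*$ of $\map(S^1 \times S^1, Z)$ that interchanges the two subspaces above and fixes the common basepoint (the constant map at $z_0$). The main subtlety to verify is that $\tau^*$ correctly matches up the null-component conditions on the two sides, which follows immediately from its preservation of the basepoint. This yields the stated identity $\Omega(\Lambda Z) = \Lambda(\Omega Z)$.
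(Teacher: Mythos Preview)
Your proposal is correct and follows essentially the same route as the paper. The paper's one-line proof simply says that ``either may be identified by adjunction with the subspace of $\map(S^1 \times S^1, Z)$ that consists of maps $F$ with $F(s_0, s) = z_0$,'' which amounts to absorbing your swap map $\tau$ into the choice of exponential-law convention for one of the two sides; your version just makes the coordinate swap and the null-component bookkeeping explicit.
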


\begin{proof}
Indeed, either may be identified by adjunction with the subspace of $\map(S^1 \times S^1, Z)$ that consists of maps $F$ with $F(s_0, s) = z_0$. 
\end{proof}

\noindent{}From this identity, we may write the term $\pi_1(\Omega\Lambda^{N-1} Y)$ that appears in (\ref{eq: extension iterated free loop}) as $\pi_1\big(\Lambda^{N-1} (\Omega Y)\big)$, which results in the following split short exact sequences of Fox torus homotopy groups
$$\xymatrix{0 \ar[r]& \tau_N(\Omega Y) \ar[r] & \tau_{N+1}(Y) \ar[r] & 
\tau_{N}(Y)  \ar[r] \ar@/_1pc/[l] & 1}.$$
Alternatively, we may write the term $\pi_1(\Omega\Lambda^{N-1} Y)$ from (\ref{eq: extension iterated free loop}) as $\pi_2(\Lambda^{N-1} Y)$, and then by using the above direct sum expression, we may write (\ref{eq: extension iterated free loop}) as
$$\xymatrix{0 \ar[r]& \bigoplus_{r = 0}^{N-1} {N\choose r} \pi_{2 + r}(Y) \ar[r] & \tau_{N+1}(Y) \ar[r] & 
\tau_{N}(Y)  \ar[r] \ar@/_1pc/[l] & 1}.$$
These expressions retrieve the result of Fox, which appears in Theorems 1.2 and 1.3 in \cite{G-G-W}. 

In \cite[Def.2.1]{G-G-W}, the $i$th \emph{Fox-Gottlieb group} is defined as the image of the homomorphism induced by the evaluation map $\omega\colon \aut Y \to Y$ on $i$th Fox torus groups.  This is a homomorphism
$$\tau_n(\omega) \colon  \pi_1\big(  \map(T^{n-1}, \aut Y) \big) \to \pi_1\big(  \map(T^{n-1}, Y; 0) \big),$$
whose image is denoted by $G\tau_n(Y) \subseteq \tau_n(Y)$.  When $n = 1$, we retrieve $G\tau_1(Y) = G_1(Y)$, the ordinary first Gottlieb group of $Y$.   When $n \geq 2$, we may identify the  homomorphism induced  by the evaluation map on $n$th Fox torus groups as 
$$\tau_n(\omega) = (\omega_*)_\# \colon   \pi_1\big(  \Lambda ^{n-1} \aut Y \big) \to \pi_1\big( \Lambda ^{n-1} Y \big),$$
and thus we have 
\begin{equation}\label{eq:Fox Gottlieb}
G\tau_n(Y) = G_1(\Lambda ^{n-1} Y)   \cong \bigoplus_{j = 0}^{n-1}  {n-1\choose j} G_{n-1+j}(Y)
\end{equation}
which, omitting  the middle term,  retrieves Theorem 2.1 of  \cite{G-G-W}.  Other identities from \cite{G-G-W} may also be deduced from properties of the free loop fibration.    Because we may identify $\map(T^n, Y;0) = \map(T^{n-1}, \Lambda Y; 0)$, it follows from  the definition that we have
$$\tau_n(Y) = \tau_{n-1}(\Lambda Y) = \cdots = \pi_1(\Lambda ^{n-1} Y).$$
Likewise, we have equalities of Fox-Gottlieb groups
$$G\tau_n(Y) = G\tau_{n-1}(\Lambda Y) = \cdots = G_1(\Lambda ^{n-1} Y).$$
Hence, the identities (\ref{eq:Fox Gottlieb})  for Fox-Gottlieb groups in terms of Gottlieb groups may be used to write, first of all,
 $$G\tau_n(Y) =  \bigoplus_{j = 0}^{n-1}  {n-1\choose j} G_{n-1+j}(Y),$$
and then, via $G\tau_n(Y) = G\tau_{n-1}(\Lambda Y)$, as 
 $$G\tau_n(Y) =  \bigoplus_{j = 0}^{n-2}  {n-2\choose j} G_{n-2+j}(\Lambda Y)$$
Equating the two right-hand sides here gives a relation between the ordinary Gottlieb groups of $Y$ and of $\Lambda Y$ that may be used recursively to retrieve our formulas for the Gottlieb groups of iterated free loop spaces.

\section{Relative Free Loop  Spaces}\label{sec:Pullbacks}

The preceding results that concern free loop and free bouquet spaces may be relativized.  For the free loop space, this is  as follows.  Let $f \colon X \to Y$ be a based map.  The pullback of the fibration $P \colon PY \to Y \times Y$, where $P(\alpha) = \big( \alpha(0), \alpha(1) \big)$, along the map $(f,f) = \Delta_Y\circ f = (f\times f)\circ\Delta_X \colon X \to Y \times Y$ results in a fibration with fibre $\Omega Y$, thus:
$$\xymatrix{\Omega Y \ar@{=}[r] \ar[d] & \Omega Y \ar[d]\\
L_f Y \ar[r] \ar[d]_{\ev_f} & PY \ar[d]^{P}\\
X \ar[r]_-{(f,f)} & Y \times Y.}$$
In case we have $f = \mathrm{id}_Y\colon Y \to Y$, then $L_fY$  reduces to the free loop space $\Lambda  Y$.  We discuss this space motivated in part by its appearance as an object of interest in string topology (cf.~\cite{G-S08, Nai11}).

The fibration $\ev_f\colon L_fY \to X$ admits a standard null-section, which we denote by $\sigma_f\colon X \to L_fY$.  It may be identified as the whisker map induced in the following pullback diagram
$$\xymatrix{  X \ar@/^1pc/[rrd]^{C_f} \ar@/_1pc/[ddr]_{1_X} \ar@{..>}[rd]^{\sigma_f} \\
 & L_f Y \ar[r] \ar[d]^{\ev_f} & P Y \ar[d]^{P}\\
 & X \ar[r]_-{(f,f)} & Y \times Y,}$$
in which $C_f\colon X \to PY$ denotes the map $C_f(x) := C_{f(x)}$, the constant path in $Y$ at $f(x)$. 

We may equally well regard this \emph{relative free loop space} $L_fY$ as being obtained as a pullback of the free loop fibration $\ev\colon \Lambda  Y \to Y$ along $f\colon X \to Y$.  Indeed,  the above pullback may be factored as a composition of pullbacks as follows:
$$\xymatrix{
L_f Y \ar[r] \ar[d] & \Lambda  Y \ar[d]^{\ev} \ar[r] & PY \ar[d]^{P}\\
X \ar[r]_-{f} & Y \ar[r]_-{\Delta_Y} & Y \times Y.}$$
Then we obtain a canonical map $(\ev, f_*)\colon \Lambda  X \to L_fY$, determined by $f\colon X \to Y$, as the whisker map induced in the following diagram:
$$\xymatrix{ \Lambda  X \ar@/^1pc/[rrd]^{f_*} \ar@/_1pc/[ddr]_{\ev} \ar@{..>}[rd] \\
 & L_f Y \ar[r] \ar[d] & \Lambda  Y \ar[d]^{\ev}\\
 & X \ar[r]_-{f} & Y .}$$

Recall from the Introduction the relative version of the Gottlieb groups, namely 
$$G_n(Y,X;f) =  \mathrm{im}\{ \omega_{\#}\colon \pi_n\big(\map(X,Y;f)\big) \to \pi_n(Y)\},$$
for each $n\geq 1$, where $\omega\colon \map(X,Y;f) \to Y$ evaluates a map at the base point. 
Our results here will be expressed in terms of these relative Gottlieb groups.

Our basic result  is a direct generalization of \thmref{introthm: Gottlieb free loop} of the Introduction.
\begin{theorem}\label{thm: relative Gottlieb free loop}
For $n \geq 1$, we have a split extension
$$
\xymatrix{0 \ar[r]& G_{n+1}(Y,X;f) \ar[r] & G_n\big(L_fY, \Lambda  X; (\ev, f_*)\big)  \ar[r] & 
G_n(X) \ar[r] \ar@/_1pc/[l] & 1}.
$$
For each $n \geq 2$, this is a split short exact sequence of abelian groups, and we have an isomorphism of abelian groups
$$G_n\big(L_fY, \Lambda  X; (\ev, f_*)\big) \cong G_n(X) \oplus G_{n+1}(Y,X;f).$$
\end{theorem}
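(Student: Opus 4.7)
The plan is to adapt the strategy of \thmref{thm: split short Gottlieb} to the pullback/section structure of $L_fY$. The key structural input is the fibration $\ev_f\colon L_fY \to X$, with fibre $\Omega Y$ and canonical null-section $\sigma_f$, which yields a split extension
$$1 \to \pi_{n+1}(Y) \to \pi_n(L_fY) \to \pi_n(X) \to 1$$
of abelian groups for $n \geq 2$ (and a semidirect product when $n = 1$). The objective is to show that the image of $\omega_\#\colon \pi_n\big(\map(\Lambda X, L_fY; \phi)\big) \to \pi_n(L_fY)$ respects this splitting, with quotient summand equal to $G_n(X)$ and kernel summand equal to $G_{n+1}(Y,X;f)$.

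First I would post-compose with the two legs of the pullback defining $L_fY$, obtaining maps $\map(\Lambda X, L_fY;\phi) \to \map(\Lambda X, X; \ev)$ and $\map(\Lambda X, L_fY;\phi) \to \map(\Lambda X, \Lambda Y; f_*)$. Each sits over the corresponding evaluation fibration and each admits a section induced by $\sigma_f$, giving a commutative diagram of split fibrations analogous to display~(\ref{eq: evaluation retract}) in the proof of \thmref{thm: split short Gottlieb}. Applying $\pi_n$ then yields a ladder of split short exact sequences, and a diagram chase on image subgroups---exactly as in the proof of \thmref{thm: split short Gottlieb}---produces the desired split extension of image subgroups.

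For the quotient summand, the composition $\map(\Lambda X, L_fY;\phi) \to \map(\Lambda X, X;\ev) \xrightarrow{\omega_X} X$ has image contained in $G_n(X,\Lambda X;\ev)$; a short retract argument using the constant-loop section $X \to \Lambda X$ identifies this with $G_n(X)$, and surjectivity onto $G_n(X)$ follows because self-equivalences of $X$ freely homotopic to $\mathrm{id}_X$ lift to self-equivalences of $L_fY$ via the parallel-transport structure of the fibration. For the kernel summand---classes of maps $\Lambda X \to L_fY$ projecting trivially to $X$---the adjunction $\map_*(\Lambda X_+, \Omega Y) \simeq \map_*(\Sigma\Lambda X_+, Y)$, combined with the constraint imposed by $\phi$ (whose $Y$-component is $f \circ \ev$), reduces these, after pinching $\Lambda X$ onto its basepoint, to classes in the image of $\pi_{n+1}(\map(X, Y; f)) \to \pi_{n+1}(Y)$, recovering $G_{n+1}(Y, X; f)$.

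The hard part will be making this last identification precise: one must verify that the kernel summand is \emph{exactly} $G_{n+1}(Y, X; f)$, neither larger nor smaller. The cleanest route is to prove a relative analog of \lemref{lem: retractions} exhibiting the evaluation $\omega_Y\colon \map(X, Y; f) \to Y$ as a retract of the fibre-direction portion of $\omega_{L_fY}\colon \map(\Lambda X, L_fY;\phi) \to L_fY$, with the retraction compatible with the free-homotopy constraint imposed by $\phi$. Once this retract is in place, the splitting of the extension is furnished by $\sigma_f$; for $n \geq 2$ all three image subgroups lie in the abelian group $\pi_n(L_fY)$, giving the direct-sum decomposition.
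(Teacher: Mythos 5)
Your architecture is the right one and matches the paper's in outline: set up a ladder of split fibration sequences over $\Omega Y \to L_fY \to X$, pass to image subgroups exactly as in \thmref{thm: split short Gottlieb}, and supply a relative analogue of \lemref{lem: retractions}. The quotient half of your argument is essentially fine: $G_n(X,\Lambda X;\ev)=G_n(X)$ by the retract argument you indicate, and surjectivity onto $G_n(X)$ needs no appeal to parallel transport---sending $a\in\aut X$ to $\phi\circ\Lambda(a)$ already does it. But two of the steps you do describe are problematic. First, the section ``induced by $\sigma_f$'' of $(\ev_f)_*\colon \map(\Lambda X, L_fY;\phi)\to\map(\Lambda X, X;\ev)$ lands in the component of $\sigma_f\circ\ev$, which is in general \emph{not} the component of $\phi=(\ev,f_*)$ (already for $X=Y$ and $f=\mathrm{id}$, the maps $\sigma\circ\ev$ and $\mathrm{id}_{\Lambda Y}$ are not freely homotopic), so the top row of your ladder is not split as claimed. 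Second, the kernel identification via ``adjunction plus pinching $\Lambda X$ onto its basepoint'' is not an argument: collapsing $\Lambda X$ produces classes in $\pi_{n+1}(Y)$ with no reference to $\map(X,Y;f)$ at all, so it cannot show that the fibre-direction image equals $G_{n+1}(Y,X;f)$ rather than something larger or smaller; neither inclusion is addressed.

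The ``relative analogue of \lemref{lem: retractions}'' that you defer to is not an optional clean-up step; it is the entire content of the proof, and the paper's proof consists precisely of constructing it. Concretely, the paper introduces the intermediate space $L_{f_*}\map(X,Y;f)$---the pullback of the path fibration of $\map(X,Y;f)$ along $\Delta\circ f_*\colon\aut X\to\map(X,Y;f)\times\map(X,Y;f)$---which sits in a split fibration sequence $\Omega\map(X,Y;f)\to L_{f_*}\map(X,Y;f)\to\aut X$ mapping down to $\Omega Y\to L_fY\to X$. It then identifies $L_{f_*}\map(X,Y;f)$ with $\map(X,L_fY;\sigma_f)$ (note: maps out of $X$, not $\Lambda X$, in the component of the null section $\sigma_f$, which is why the canonical sections exist here) via a cube of pullback squares, and exhibits this space as a retract of $\map(\Lambda X,L_fY;(\ev,f_*))$ compatibly with the evaluation maps, exactly as $\Phi$ and $r$ do in \lemref{lem: retractions}. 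That retraction is what simultaneously identifies the middle image as $G_n\big(L_fY,\Lambda X;(\ev,f_*)\big)$ and the fibre image, namely the image of $\pi_n\big(\Omega\map(X,Y;f)\big)\cong\pi_{n+1}\big(\map(X,Y;f)\big)$ in $\pi_{n+1}(Y)$, as $G_{n+1}(Y,X;f)$. Until you construct it explicitly, the proof is incomplete.
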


\begin{proof}
The map $f\colon X \to Y$ induces $f_*\colon \aut X \to \map(X,Y;f)$, which we may use to construct $L_{f_*}\map(X,Y;f)$ as the pullback of $P\colon P\map(X,Y;f) \to \map(X,Y;f) \times \map(X,Y;f)$ along  
$\Delta \circ f_*\colon \aut  X \to \map(X,Y;f) \times \map(X,Y;f)$.  The resulting fibre sequence, displayed as the left-hand column in the following pullback diagram   
$$\xymatrix{\Omega \map(X,Y;f) \ar@{=}[r] \ar[d] & \Omega \map(X,Y;f) \ar[d] \\
L_{f_*}\map(X,Y;f) \ar[r] \ar[d] & P\map(X,Y;f) \ar[d]^{P} \\
  \aut  X  \ar[r]_-{\Delta \circ f_*} & \map(X,Y;f) \times \map(X,Y;f), }$$
  fits into a ladder of fibre sequences, displayed as the rows in the following diagram
$$\xymatrix{
 & &  \map\big(\Lambda  X, L_fY; (\ev, f_*) \big) \ar@<1ex>[d]^{r} \\
 \Omega \map(X,Y;f) \ar[d]_{\Omega_\ev} \ar[rr]^-{j_{f_*}}  & & L_{f_*}\map(X,Y;f)\ar[r]_-{\ev_{f_*}} 
\ar[d]_{(\omega_B)_{*}}  \ar@<1ex>[u]^{\Phi}
   & \aut X  \ar[d]^{\omega_X} \ar@/_1pc/[l]_-{\sigma_{f_*}} \\
\Omega Y \ar[rr]_-{j_f}  &  & L_fY \ar[r]^-{\ev_f}    & X \ar@/^1pc/[l]^-{\sigma_f}.  } $$
We describe the retraction $L_{f_*}\map(X,Y;f) \to \map\big(\Lambda  X, L_fY; (\ev, f_*)  \to L_{f_*}\map(X,Y;f)$.  As a first step, we identify 
$$L_{f_*}\map(X,Y;f) \equiv \map( X, L_fY; \sigma_f).$$
This follows from an identification of pullback squares
\begin{displaymath}
\xymatrix@C=-32pt{ &
\map( X, L_fY; \sigma_f)
  \ar[rr]
\ar'[d]^-{(\ev_f)_*}[dd] & &
\map(X, PY)
\ar[dd]^-{P_*} \\
L_{f_*}\map(X,Y;f)
\ar[ru] \ar[rr]
  \ar[dd]_{\omega_\#}& &
P\, \map(X, Y; f)
\ar@{=}[ru]
\ar[dd]^(0.3){\omega_\#}\\
  &  \map(X,X; 1_X)
\ar'[r]_(0.7){(f,f)_*}[rr] & &
\map(X, Y\times Y)  \\
\aut X \ar@{=}[ru]
\ar[rr]_-{(f_*, f_*)}
& & \map(X, Y; f) \times \map(X, Y; f) \ar@{=}[ru] }
\end{displaymath}
Then the argument is a direct generalization of that used in the proof of \thmref{thm: split short Gottlieb}.
Notice that, in degrees $\geq 2$, all groups concerned are abelian: the ordinary Gottlieb groups are always abelian, and  $G_{r}(Y,X;f) \subseteq \pi_r(Y)$, which is abelian for $r \geq 2$.
\end{proof}

We may iterate this construction, and obtain decompositions that are relative versions of those in 
\secref{sec:Free Bouquets}.  We indicate this direction, but do not attempt any great generality. To iterate,   we regard $L_fY$ as the pullback of $\ev\colon \Lambda  Y \to Y$ along $f\colon X \to Y$.  Then the next step is to form $L^2_fY$ as the pullback of $\ev\colon \Lambda  L_fY \to L_fY$ along $(\ev, f_*)\colon \Lambda  X \to L_fY$.  We obtain a map
$(\ev,f_*)^2\colon \Lambda ^2X \to L^2_fY$ as the whisker map in the following pullback diagram:
$$\xymatrix{ \Lambda ^2 X \ar@/^1pc/[rrd]^{\Lambda (\ev, f_*)} \ar@/_1pc/[ddr]_{\ev} \ar@{..>}[rd] \\
 & L^2_f Y \ar[r] \ar[d] & \Lambda  L_fY \ar[d]^{\ev}\\
 & \Lambda  X \ar[r]_-{(\ev, f_*)} & L_fY .}$$
We state, without proof, the decomposition for the abelian relative Gottlieb groups of the iterated construction:

\begin{corollary}
For each $n \geq 2$, we have an isomorphism of abelian groups
$$G_n\big(L^2_fY, \Lambda ^2 X; (\ev, f_*)^2\big) \cong G_n(X) \oplus 2 G_{n+1}(X) \oplus G_{n+2}(Y,X;f). \qed$$
\end{corollary}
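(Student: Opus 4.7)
The plan is to recognize the iterated construction as another instance of the basic relative free loop space setup, and then bootstrap the already-proved theorems to obtain the decomposition. Write $g = (\ev,f_*)\colon \Lambda X \to L_fY$ for the whisker map produced in the unrelativized iteration, and observe that, by the very definition given just before the corollary, $L^2_fY$ is precisely the relative free loop space $L_g(L_fY)$ of the map $g$, and that the iterated whisker map $(\ev,f_*)^2\colon \Lambda^2 X \to L^2_fY$ agrees with the whisker map $(\ev, g_*)\colon \Lambda(\Lambda X) \to L_g(L_fY)$ produced by \thmref{thm: relative Gottlieb free loop} applied to $g$.

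Once this identification is in place, I would apply \thmref{thm: relative Gottlieb free loop} to the map $g$, substituting $X \mapsto \Lambda X$ and $Y \mapsto L_fY$, to obtain, for $n \geq 2$, an isomorphism of abelian groups
$$G_n\bigl(L^2_f Y, \Lambda^2 X; (\ev,f_*)^2\bigr) \cong G_n(\Lambda X) \oplus G_{n+1}\bigl(L_fY, \Lambda X; (\ev,f_*)\bigr).$$
Then I would decompose each summand separately. For the first, \thmref{introthm: Gottlieb free loop} (the prototype) gives $G_n(\Lambda X) \cong G_n(X) \oplus G_{n+1}(X)$. For the second, \thmref{thm: relative Gottlieb free loop} applied in degree $n+1$ (which is $\geq 2$ since $n \geq 2$) gives
$$G_{n+1}\bigl(L_fY, \Lambda X; (\ev,f_*)\bigr) \cong G_{n+1}(X) \oplus G_{n+2}(Y, X; f).$$
Assembling these yields the claimed identity, with the two copies of $G_{n+1}(X)$ combining into the term $2\,G_{n+1}(X)$.

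The main obstacle is the identification step: one must check that the iterated pullback construction defining $L^2_fY$ really does coincide with the relative free loop space construction applied to the map $g = (\ev,f_*)$, and that the whisker map $(\ev,f_*)^2$ is the map supplied by the general theorem in this substituted setting. This is a pullback-pasting argument: the defining diagram for $L^2_fY$ presents it as the pullback of $\ev\colon \Lambda L_fY \to L_fY$ along $g$, which is exactly how $L_g(L_fY)$ is defined, and naturality of the whisker map construction then identifies the two induced maps out of $\Lambda^2 X$. Once this compatibility is established, the decomposition follows formally by invoking the two theorems in sequence, and no further calculation is needed.
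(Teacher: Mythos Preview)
Your proposal is correct and is precisely the intended argument: the paper in fact states this corollary \emph{without proof}, so there is no competing derivation to compare against, but the remark immediately following (that setting $f = 1_Y$ recovers the decompositions of \secref{sec:Free Bouquets}) confirms that iterating \thmref{thm: relative Gottlieb free loop} is exactly what the authors had in mind. Your identification $L^2_fY = L_g(L_fY)$ with $g = (\ev,f_*)$ is immediate from the defining pullback diagram, and the two-step decomposition via \thmref{thm: relative Gottlieb free loop} and \thmref{introthm: Gottlieb free loop} goes through as you describe.
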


If $f\colon X \to Y$ is the identity, then this decomposition and that of the previous result reduce to those in \secref{sec:Free Bouquets}.
We mention also that, as we did with the free loop space, we may extend this construction to a free bouquet space, and also iterate it, so as to obtain what we could call \emph{iterated, relative free bouquet spaces}.  In the first instance, the effect would be to introduce a coefficient of $m$ before the 
$G_{n+1}(Y,X;f)$ term in 
\thmref{thm: relative Gottlieb free loop}.

\section{Consequences and Concluding Remarks}\label{sec:String}

It is well-known that, if  $Y$ is an $H$-space, then we have $G_*(Y) = \pi_*(Y)$.   A space that satisfies $G_*(Y) = \pi_*(Y)$ is known as a \emph{$G$-space}. 
Work of Ganea and others has produced separating examples that illustrate, in general, one may have a  $G$-space that is not an $H$-space.  In \cite{Agu87}, Aguad{\'e} introduced the notion of a \emph{$T$-space}, namely a space for which the free loop fibration is fibre-homotopically trivial.  Again, an $H$-space is a $T$-space, but a separating example that illustrates that, generally, a $T$-space need not be an $H$-space is given in \cite{Agu87}.  Also, any $T$-space must be a $G$-space (\cite[Th.2.2]{W-Y95}---a stronger statement holds: see in the proof below).  We are unaware of a separating example between $T$-spaces and $G$-spaces.
\thmref{thm: General Gottlieb group} leads to the following observations in this area.

\begin{corollary}\label{cor: G-space}
\begin{enumerate}
\item If $Y$ is a $T$-space, then so is $\map(X,Y;0)$.

\item Suppose that $\Sigma X$ splits as some wedge of spheres.  If $Y$ is a $G$-space, then so is $\map(X,Y;0)$.
\end{enumerate}
\end{corollary}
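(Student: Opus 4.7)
The two parts call for different tools: part (1) rests on a direct exponential-law identification for free loop spaces, while part (2) combines \thmref{thm: General Gottlieb group} with a parallel decomposition of the full homotopy groups.

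For part (1), the plan is to identify the free loop fibration of $\map(X,Y;0)$ with the image, under $\map(X,-;0)$, of the free loop fibration of $Y$. The exponential law provides a homeomorphism
$$\Lambda\,\map(X,Y;0)\ =\ \map\big(S^1,\map(X,Y;0);0\big)\ \cong\ \map\big(X,\Lambda Y;0\big),$$
and under this identification the free-loop evaluation fibration $\Lambda\,\map(X,Y;0)\to\map(X,Y;0)$ corresponds to post-composition with $\ev\colon \Lambda Y\to Y$. The $T$-space hypothesis supplies a fibre-homotopy equivalence $\Lambda Y\simeq Y\times\Omega Y$ over $Y$; applying $\map(X,-;0)$ converts this to a fibre-homotopy equivalence $\map(X,\Lambda Y;0)\simeq \map(X,Y;0)\times \map(X,\Omega Y;0)$ over $\map(X,Y;0)$. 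Combining with the standard identification $\map(X,\Omega Y;0)\cong\Omega\,\map(X,Y;0)$ gives the desired fibre-homotopy trivialization, so $\map(X,Y;0)$ is itself a $T$-space.

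For part (2), the plan is to combine \thmref{thm: General Gottlieb group} with an analogous decomposition for the full homotopy groups. The sectioned fibration $\ev\colon\map(X,Y;0)\to Y$, with null section $\sigma_Y$, yields a split short exact sequence
$$0\longrightarrow \pi_n\big(\map_*(X,Y)\big)\longrightarrow \pi_n\big(\map(X,Y;0)\big)\longrightarrow \pi_n(Y)\longrightarrow 0,$$
in which $\pi_n\big(\map_*(X,Y)\big)=[\Sigma^n X,Y]$. When $\Sigma X=S^{i_1+1}\vee\cdots\vee S^{i_k+1}$, this fibre term decomposes as $\bigoplus_{r=1}^{k}\pi_{n+i_r}(Y)$, giving
$$\pi_n\big(\map(X,Y;0)\big)\cong \pi_n(Y)\oplus\bigoplus_{r=1}^{k}\pi_{n+i_r}(Y).$$
Comparing summand-by-summand with the decomposition $G_n(\map(X,Y;0))\cong G_n(Y)\oplus\bigoplus_{r=1}^{k} G_{n+i_r}(Y)$ from \thmref{thm: General Gottlieb group}, and using that each Gottlieb group is a subgroup of the corresponding homotopy group, the assumption $G_m(Y)=\pi_m(Y)$ for all $m$ forces equality at every summand. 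Hence $G_n(\map(X,Y;0))=\pi_n(\map(X,Y;0))$ for every $n$, and $\map(X,Y;0)$ is a $G$-space.

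The main obstacle lies in part (1): verifying that the exponential-law identification carries the free-loop fibration-with-section of $\map(X,Y;0)$ onto the post-composition fibration $\map(X,\Lambda Y;0)\to \map(X,Y;0)$, and that a fibre-homotopy trivialization over $Y$ descends to one over the correct null components (rather than some larger subspace of $\map(X,\Lambda Y)$). The well-pointedness, Hausdorff, and local-compactness hypotheses from \secref{sec: Gottlieb function space} are arranged precisely so that $\map(X,-)$ preserves fibrations and fibre homotopy equivalences, which is what makes this step go through.
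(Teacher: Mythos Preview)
Your argument for part~(2) is essentially the paper's own: both decompose $G_n\big(\map(X,Y;0)\big)$ and $\pi_n\big(\map(X,Y;0)\big)$ via the split evaluation fibration and compare term-by-term. The paper is slightly more explicit on one point you leave implicit: it records the ladder
\[
\xymatrix{ 0 \ar[r] & \mathcal{G}(\Sigma^n X, Y) \ar[r] \ar@{^{(}->}[d] &
G_n\big( \map(X,Y;0) \big) \ar[r] \ar@{^{(}->}[d]& G_n(Y)  \ar[r] \ar@{^{(}->}[d]& 0\\
0 \ar[r] & [\Sigma^n X, Y] \ar[r] &
\pi_n\big( \map(X,Y;0) \big) \ar[r] & \pi_n(Y)  \ar[r] & 0}
\]
so that the inclusion $G_n\hookrightarrow\pi_n$ is a \emph{map of short exact sequences}. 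This is what justifies the ``summand-by-summand'' comparison; an abstract isomorphism $G\cong H$ with $G\subseteq H$ does not by itself force $G=H$ (think $2\Z\subset\Z$). You clearly intend this compatibility---both of your decompositions come from the same sectioned fibration---but it is worth saying so.

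Your part~(1), by contrast, takes a genuinely different route from the paper. The paper invokes the Woo--Yoon characterization \cite[Th.2.2]{W-Y95} that $Y$ is a $T$-space iff $\mathcal{G}(\Sigma B,Y)=[\Sigma B,Y]$ for every $B$, and then runs the same ladder argument as in part~(2), now with an arbitrary $A$ in place of $S^{n-1}$. You instead give a direct geometric proof: identify the free loop fibration of $\map(X,Y;0)$ with $\big(\ev_*\colon \map(X,\Lambda Y;0)\to\map(X,Y;0)\big)$ via the exponential law, and push the fibre-homotopy trivialization of $\Lambda Y\to Y$ through $\map(X,-;0)$. Your approach is more self-contained (it avoids importing the Woo--Yoon equivalence) and arguably more transparent about \emph{why} the $T$-space property is inherited. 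The paper's approach, on the other hand, treats (1) and (2) uniformly with the machinery of \thmref{thm: split short Gottlieb}, which is economical given what has already been developed. Both are valid; your caveat about tracking null components and checking that $\map(X,-)$ preserves fibre-homotopy equivalences is exactly the point that needs care, and the standing hypotheses do handle it.
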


\begin{proof}
(1) In \cite[Th.2.2]{W-Y95}, it is shown that $Y$ is a $T$-space if, and only if, we have $\mathcal{G}(\Sigma B, Y) = [\Sigma B, Y]$, for any suspension $\Sigma B$.  Consider the (split) short exact sequence of image subgroups, included into the (split) short exact sequence of homotopy sets, from which we obtained 
\thmref{thm: split short Gottlieb}.  Namely, from the proof of that result, we have the following inclusion of one short exact sequence in another:
$$\xymatrix{ 0 \ar[r] & \mathcal{G}(\Sigma(A\wedge X), Y) \ar[r] \ar@{^{(}->}[d] &
\mathcal{G}\big(\Sigma A, \map(X,Y;0) \big) \ar[r] \ar@{^{(}->}[d]& \mathcal{G}(\Sigma A, Y)  \ar[r] \ar@{^{(}->}[d]& 0\\
0 \ar[r] & [\Sigma(A\wedge X), Y] \ar[r] &
[\Sigma A, \map(X,Y;0) ] \ar[r] & [\Sigma A, Y]  \ar[r] & 0}$$
Since $Y$ is a $T$-space, the left and right inclusions are equalities, which implies the middle inclusion is an equality.  From the criterion of \cite[Th.2.2]{W-Y95}, we obtain that $\map(X,Y;0)$ is a $T$-space.

(2) Now suppose that $Y$ is a $G$-space.  We take $A = S^{n-1}$, for $n \geq 1$, in the above diagram and obtain the following inclusion of one short exact sequence in another:
$$\xymatrix{ 0 \ar[r] & \mathcal{G}(\Sigma^n X, Y) \ar[r] \ar@{^{(}->}[d] &
G_n\big( \map(X,Y;0) \big) \ar[r] \ar@{^{(}->}[d]& G_n(Y)  \ar[r] \ar@{^{(}->}[d]& 0\\
0 \ar[r] & [\Sigma^n X, Y] \ar[r] &
\pi_n\big( \map(X,Y;0) \big) \ar[r] & \pi_n(Y)  \ar[r] & 0}$$
Our assumption on $\Sigma X$ implies that the left-hand inclusion decomposes into a product of inclusions of Gottlieb groups of $Y$ into the corresponding homotopy groups of $Y$.  Since $Y$ is a $G$-space, left and right inclusions are equalities, and it follows  that the middle inclusion is an equality.  That is, $\map(X,Y;0)$ is a $G$-space.  
\end{proof}

\begin{remark}
It is easily seen that the property of being a $G$-space, respectively a $T$-space, is inherited by retracts (cf.~\cite[Cor.2.10]{W-Y95} for the latter).  Since $\map(X,Y;0)$ retracts onto $Y$, the statements in \corref{cor: G-space} may be replaced by two-way implications.  Notice that the first includes  \cite[Th.2.12]{W-Y95}.  But the interest in \corref{cor: G-space} comes from the fact that we are able to deduce properties of the function space $\map(X,Y)$ from properties of $Y$ alone, rather than the other way around.  This is the point of view developed in our remaining observations.
\end{remark}

As we have seen in \thmref{thm: General Gottlieb group}, if  $\Sigma X$ splits as a bouquet of spheres, then \corref{cor: Gottlieb function space} yields a decomposition of $G_*(\map(X,Y;0))$ in terms of the Gottlieb groups of $Y$.  

\begin{example}
Suppose we take $X = S^5 \cup_\alpha e^{10}$, with $\alpha \in \pi_9(S^5) \cong \mathbb{Z}_2$ the non-zero element.  Since $\pi_{10}(S^6) = 0$, we have that $\Sigma X \simeq S^6 \vee S^{10}$.  From
\corref{cor: Gottlieb function space}, we obtain
$$G_n(\map(S^5 \cup_\alpha e^{10}, Y;0)) \cong G_n(Y) \oplus G_{n+5}(Y) \oplus G_{n+10}(Y),$$
for $n \geq 1$.
\end{example} 

Likewise, if we have a space $X$ for which some suspension $\Sigma^n X$ splits as a wedge of spheres, then all but finitely many of the Gottlieb groups of $\map(X,Y)$ may be expressed as direct sums of the Gottlieb groups of $Y$.

Pursuing this line somewhat leads to a strong consequence for the global structure of Gottlieb groups of  function spaces.   Suppose $Y$ is a simply connected, finite complex.  Then a result of F{\'e}lix-Halperin (cf.~\cite[Prop.28.8]{FHT01}) implies that the Gottlieb groups of $Y$ are finite groups in all but a finite number of degrees.  

The function spaces that we consider here are neither simply connected, nor finite complexes in general.  But from our results above, we deduce the following intriguing fact: if $X$ and $Y$ are finite complexes, and $Y$ is simply connected, then $G_n\big(\map(X,Y;0)\big)$ is a finite group in all but a finite number of degrees.  We first prove \thmref{thm: gamma formula} of the Introduction.  This result explicitly identifies the rational Gottlieb groups of $\map(X,Y)$ in terms of the rational homology of $X$ and the rational Gottlieb groups of $Y$, which are both fairly amenable to calculation.

\begin{proof}[Proof of \thmref{thm: gamma formula}]
\corref{cor: Gottlieb function space} gives  an isomorphism of abelian groups
$$G_n\big(\map(X, Y;0)\big) \cong G_n(Y) \oplus \mathcal{G}(\Sigma^{n} X, Y),$$
for each $n \geq 1$.  Hence $\gamma_n\big(\map(X, Y;0)\big) = \gamma_n(Y) + \mathrm{dim}_\Q\big( \mathcal{G}(\Sigma^{n} X, Y)\otimes\Q\big).$
Here, $\mathrm{dim}_\Q(-)$ refers to the dimension as a rational vector space. We focus on this last term.  Let $l\colon Y \to Y_\Q$ denote the rationalization of $Y$.  The homomorphism induced by $l$ rationalizes the group $[\Sigma^{n} X, Y]$.  That is, we have $l_*\big([\Sigma^{n} X, Y]\big) \cong [\Sigma^{n} X, Y] \otimes \Q$ as abelian groups.  This restricts  to generalized Gottlieb groups, and gives  
$l_*\big(\mathcal{G}(\Sigma^{n} X, Y)\big) \cong \mathcal{G}(\Sigma^{n} X, Y) \otimes \Q$.  Furthermore, because $Y$ is finite, we may identify  
$l_*\big(\mathcal{G}(\Sigma^{n} X, Y)\big) = \mathcal{G}(\Sigma^{n} X, Y_\Q)$,  (see 
\cite[Th.2.2]{Lan75}, and \cite[Th.3.2]{W-K86}).  

Now $\Sigma^n X$ has the same rational homotopy type as some wedge of spheres (see \cite[Th.24.5]{FHT01}).  Since a rational homotopy equivalence preserves Betti numbers, we must have a rational homotopy equivalence
$$\theta \colon \bigvee_{i =1}^{\textrm{dim}\,X}\ \beta_i(X) S^{n+i} \to \Sigma^n X.$$
Here, for $b$ a non-negative integer, $bS^n$ denotes the $n$-fold wedge $S^n \vee \cdots \vee S^n$.
But then the isomorphism $\theta^*\colon [\Sigma^{n} X, Y_\Q] 
\to  [\vee_{i =1}^{\textrm{dim}\,X}\ \beta_i(X) S^{n+i},  Y_\Q]$ restricts to an isomorphism
$\theta^*\colon \mathcal{G}(\Sigma^{n} X, Y_\Q) 
\to  \mathcal{G}(\vee_{i =1}^{\textrm{dim}\,X}\ \beta_i(X) S^{n+i},  Y_\Q)$, and we may write
 $$\mathcal{G}(\bigvee_{i =1}^{\textrm{dim}\,X}\ \beta_i(X) S^{n+i},  Y_\Q) \cong \oplus_{i =1}^{\textrm{dim}\,X}\ \beta_i(X) \mathcal{G}( S^{n+i},  Y_\Q) = \oplus_{i =1}^{\textrm{dim}\,X}\ \beta_i(X) G_{n+i}(Y_\Q).$$
Once again, since $Y$ is finite, we have $G_{n+i}(Y)\otimes\Q \cong G_{n+i}(Y_\Q)$, and so we have
$$\mathrm{dim}_\Q\big( \mathcal{G}(\Sigma^{n} X, Y)\otimes\Q\big) = \sum_{i=1}^{\textrm{dim}\,X}\ \beta_i(X) \gamma_{n+i}(Y).$$
Combining this with the $G_n(Y)$ term gives the result.
\end{proof}

\begin{corollary}\label{cor:finite Gottlieb}
Let $X$ and $Y$ be finite  complexes,  with $Y$ simply connected.    
Then $G_n\big(\map(X,Y;0)\big)$ is a finite group for  all but  finitely many $n$.    If $N$ is the highest degree in which $G_N(Y)$ has positive rank,  then $N$ is also the highest degree in which  $G_N\big(\map(X,Y;0)\big)$ has positive rank, and $\gamma_N\big(\map(X,Y;0)\big) = \gamma_N(Y)$.
\end{corollary}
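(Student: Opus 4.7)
The plan is to extract both assertions directly from the rank formula of \thmref{thm: gamma formula}, combined with the Félix-Halperin finiteness result mentioned just before the statement. The core observation is that saying $G_n(\map(X,Y;0))$ is finite is equivalent to saying $\gamma_n(\map(X,Y;0)) = 0$, so both claims reduce to statements about vanishing of rational Gottlieb numbers, and the rank formula lets us translate these into vanishing statements for $Y$ alone.

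For the first claim, I would invoke the Félix-Halperin result (cited as \cite[Prop.28.8]{FHT01}) to obtain an integer $M$ such that $\gamma_m(Y) = 0$ for all $m > M$. Then for any $n > M$, every index $n+i$ with $0 \le i \le \dim X$ satisfies $n+i > M$, so every term on the right-hand side of the formula
\[
\gamma_n\big(\map(X,Y;0)\big) = \sum_{i=0}^{\dim X} \beta_i(X)\, \gamma_{n+i}(Y)
\]
vanishes. Hence $\gamma_n(\map(X,Y;0)) = 0$, i.e.\ $G_n(\map(X,Y;0))$ has rank zero. Since $\map(X,Y;0)$ has the homotopy type of a CW complex (using the finiteness of $X$ and $Y$), its Gottlieb groups are finitely generated abelian groups in each degree, and a finitely generated abelian group of rank zero is finite. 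This gives the first statement.

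For the second claim, I would evaluate the rank formula at $n = N$. Since $N$ is the highest degree with $\gamma_N(Y) > 0$, we have $\gamma_{N+i}(Y) = 0$ for every $i \geq 1$, so only the $i = 0$ term survives, giving
\[
\gamma_N\big(\map(X,Y;0)\big) = \beta_0(X)\, \gamma_N(Y) = \gamma_N(Y),
\]
because $X$ is connected and therefore $\beta_0(X) = 1$. For any $n > N$ the same argument shows every summand vanishes, so $\gamma_n(\map(X,Y;0)) = 0$; thus $N$ is indeed the highest degree in which $G_*(\map(X,Y;0))$ has positive rank.

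There is no real obstacle here: the substantive input is \thmref{thm: gamma formula} together with the Félix-Halperin finiteness statement for simply connected finite complexes. The only mild subtlety is ensuring that $G_n(\map(X,Y;0))$ is a finitely generated abelian group so that vanishing rank implies finiteness, which follows from the CW structure on $\map(X,Y;0)$ under our finiteness hypotheses, together with the fact that Gottlieb groups are subgroups of homotopy groups.
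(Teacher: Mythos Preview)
Your proof is correct and follows the same approach as the paper, which simply observes that F\'elix--Halperin guarantees the existence of such an $N$ and then says the remaining assertions follow from \thmref{thm: gamma formula}. Your version usefully spells out the finite-generation step (needed so that rank zero implies finiteness), which the paper leaves implicit.
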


\begin{proof}
The result of F{\'e}lix-Halperin \cite[Prop.28.8]{FHT01} mentioned above  guarantees there is some  $N$ that is the highest degree in which $G_N(Y)$ has positive rank.  The remaining assertions follow from \thmref{thm: gamma formula}
\end{proof}

Finally, we observe that \thmref{thm: General Gottlieb group} could, in  the right circumstances, provide a necessary condition for a space to be a function space of the kind considered there.  For example, suppose that we have a fibration $\Omega Y \to E \to Y$ that admits a section, for some space $Y$. One might ask whether $E$ is of the homotopy type of  $\Lambda Y$?  \thmref{thm: General Gottlieb group} requires that  $G_*(E) \cong G_*(Y) \oplus G_{*+1}(Y)$ for this to be possible.  

%\nocite{*}

%\bibliographystyle{amsplain}
%\bibliography{GFL}
     
\def\cprime{$'$}
\providecommand{\bysame}{\leavevmode\hbox to3em{\hrulefill}\thinspace}
\providecommand{\MR}{\relax\ifhmode\unskip\space\fi MR }
% \MRhref is called by the amsart/book/proc definition of \MR.
\providecommand{\MRhref}[2]{%
  \href{http://www.ams.org/mathscinet-getitem?mr=#1}{#2}
}
\providecommand{\href}[2]{#2}

\end{document}